\newtheorem{lemma}{Lemma}
\newtheorem{thm}{Theorem}
\newcommand{\cC}[0]{%
\mathcal{C}}
\DeclareMathOperator{\dom}{dom}
\DeclareMathOperator{\cod}{cod}
\DeclareMathOperator{\domh}{domh}
\DeclareMathOperator{\domv}{domv}
\DeclareMathOperator{\codh}{codh}
\DeclareMathOperator{\codv}{codv}
\newcommand{\op}{\text{op}}
\newcommand{\Cat}[0]{%
\mathbf{Cat}}
\def\defaultfacecolor{green}
\tikzset{cross/.style={cross out, draw, 
         minimum size=2*(#1-\pgflinewidth), 
         inner sep=0pt, outer sep=0pt}}
\tikzstyle{bnode}=[draw,black,circle,fill=black,inner sep=1pt]
\tikzstyle{bplace}=[draw,blue,thick,cross,inner sep=2pt]
\tikzstyle{yspot}=[draw,\defaultfacecolor,circle,fill=\defaultfacecolor,inner sep=1pt]
\tikzstyle{eface}=[draw,\defaultfacecolor]
\tikzstyle{rnode}=[draw,red,circle,fill=red,inner sep=1pt]
\tikzstyle{dedge}=[postaction={nomorepostaction,decorate,
\tikzstyle{levelbar}=[dashed]
\tikzset{nomorepostaction/.code={\let\tikz@postactions\pgfutil@empty}}
\newcommand\storeface[2]{\expandafter\xdef\csname faceid#1\endcsname{#2}}
\newcommand\getface[1]{\csname faceid#1\endcsname}
\newcounter{nextfaceid}
\newcounter{maxfaceseen}
\newcommand\storefinalface[2]{\expandafter\xdef\csname finalfaceid#1\endcsname{#2}}
\newcommand\getfinalface[1]{\csname finalfaceid#1\endcsname}
\newcommand\storefacecolor[2]{\expandafter\xdef\csname facecolor#1\endcsname{#2}}
\newcommand\getfacecolor[1]{\csname facecolor#1\endcsname}
\newcommand\storeminslice[2]{\expandafter\xdef\csname minfaceslice#1\endcsname{#2}}
\newcommand\getminslice[1]{\csname minfaceslice#1\endcsname}
\newcommand\storeedgeid[2]{\expandafter\xdef\csname edgeid#1\endcsname{#2}}
\newcommand\getedgeid[1]{\csname edgeid#1\endcsname}
\newcounter{nextedgeid}
\newcommand\storeedgepath[2]{\expandafter\xdef\csname edgepath#1\endcsname{#2}}
\newcommand\getedgepath[1]{\csname edgepath#1\endcsname}
\newcommand\expandedgepath[2]{\storeedgepath{\getedgeid{#1}}{\getedgepath{\getedgeid{#1}} #2}}
\newcommand\storeedgecolor[2]{\expandafter\xdef\csname edgecolor#1\endcsname{#2}}
\newcommand\getedgecolor[1]{\csname edgecolor#1\endcsname}
\newcommand\mergefaces[2]{
   \pgfmathtruncatemacro\valfa{\getfinalface{#1}}
   \pgfmathtruncatemacro\valfb{\getfinalface{#2}}
   \pgfmathtruncatemacro\newfaceid{min(\valfa, \valfb)}
   \pgfmathtruncatemacro\latestfaceid{\themaxfaceseen - 1}
   \foreach \x in {0,...,\latestfaceid}{
       \ifthenelse{\getfinalface{\x}=\valfa \OR \getfinalface{\x}=\valfb}{
          \storefinalface{\x}{\newfaceid}
       }{}
   }     
}
\def\drawfaces{0}
\def\writefaceids{0}
\def\drawplaces{0}
\newcommand\startdiagram[1]{
   \pgfmathtruncatemacro\maxstrandidx{#1 - 1}
   \pgfmathsetmacro\centeringoffset{-0.5 * #1}
   \pgfmathtruncatemacro\diagramlevel{0}

   % Init faces
   \foreach \x in {0,...,\maxstrandidx}{
      \storeface{\x}{\x}
      \storefinalface{\x}{\x}
      \storefacecolor{\x}{\defaultfacecolor}
      \storeminslice{\x}{0}
   }
   \pgfmathtruncatemacro\tmpnextfaceid{\maxstrandidx + 1}
   \setcounter{nextfaceid}{\tmpnextfaceid}
   \setcounter{maxfaceseen}{\tmpnextfaceid}

   % Init edges
   \setcounter{nextedgeid}{0}
}
\newcommand\drawinitialstrands[1]{
   \pgfmathtruncatemacro\maxstrandidx{#1 - 1}
   \pgfmathsetmacro\centeringoffset{-0.5 * #1}
   \pgfmathtruncatemacro\diagramlevel{0}

   \foreach \x in {0,...,\maxstrandidx}{
      \storeface{\x}{\x}
   }

  \begin{scope}[xshift=\centeringoffset cm]
   \ifthenelse{\maxstrandidx = 0}{}{
   \foreach \x in {1,...,\maxstrandidx}{
     %\draw ($(\x + \centeringoffset,.5)$) -- ($(\x + \centeringoffset,0)$);

     % Store edges
     \storeedgeid{\x}{\thenextedgeid}
     \storeedgepath{\thenextedgeid}{($(\x + \centeringoffset,.5)$) -- ($(\x+\centeringoffset,0)$)};
     \storeedgecolor{\thenextedgeid}{black}
     \stepcounter{nextedgeid}
   }
   }
   \end{scope}

    % Place nodes for spots and places, even if they are not drawn
    \foreach \x in {0,...,\maxstrandidx} {
        \node at ($(\x+\centeringoffset + .5, 0-\diagramlevel)$) (spot_\diagramlevel_\x) {};
    }
    \foreach \x in {1,...,\maxstrandidx} {
      \node at ($(\x+\centeringoffset, 0-\diagramlevel)$) (place_\diagramlevel_\x) {};
    }

   \pgfmathtruncatemacro\tmpnextfaceid{\maxstrandidx + 1}
   \setcounter{nextfaceid}{\tmpnextfaceid}

   \pgfmathtruncatemacro\latestfaceid{\themaxfaceseen - 1}
   % Debug to print the final faces
   %\foreach \x in {0,...,\latestfaceid}{
   %   \node at ($(6+0.5*\x, 0)$) {\getfinalface{\x}};
   %}

}
\newcommand\finishdiagram[0]{
   \ifthenelse{\maxstrandidx = 0}{}{
     \foreach \x in {1,...,\maxstrandidx}{
       \expandedgepath{\x}{ -- +(0,-.5)};
     %\draw ($(\x + \centeringoffset, 0 - \diagramlevel)$) -- +(0,-.5);
     
   }
   }

   % Draw stored edges
   \pgfmathtruncatemacro\maxedgeid{\thenextedgeid - 1}
   \ifthenelse{\thenextedgeid = 0}{}{
   \foreach \eid in {0,...,\maxedgeid}{
     \draw[rounded corners,\getedgecolor{\eid}] \getedgepath{\eid} ;
   }
   }
}
\newcommand\scanslice[3]{
    %% first argument: nb wires before
    %% second argument: nb wires input
    %% last argument: nb wires after
    \def\wb{#1}
    \def\inputs{#2}
    \def\outputs{#3}
    \pgfmathtruncatemacro\nextdiaglevel{\diagramlevel+1}
    \pgfmathtruncatemacro\horizoffset{\outputs - \inputs}
    \pgfmathtruncatemacro\bottomrightcorner{\wb + \outputs + 1}
    \pgfmathtruncatemacro\innertoprightcorner{\wb + \inputs}

    \ifthenelse{\outputs=0 \AND \inputs>0}{
      % If no outputs but some inputs, we need to merge the corresponding faces
      \mergefaces{\getface{\wb}}{\getface{\innertoprightcorner}}
    }{}

    % Update face indices
    %\shiftfaces{\innertoprightcorner}{\horizoffset}{\outputs}

   \ifthenelse{\horizoffset > 0}{
       \foreach \x in {\maxstrandidx,...,\innertoprightcorner}{
          \pgfmathtruncatemacro\offsetidx{\x + \horizoffset}
          \storeface{\offsetidx}{\getface{\x}}
       }
   }{
       \foreach \x in {\innertoprightcorner,...,\maxstrandidx}{
          \pgfmathtruncatemacro\offsetidx{\x + \horizoffset}
          \storeface{\offsetidx}{\getface{\x}}
       }
   }

   % Allocate new indices for the freshly created faces
   \pgfmathtruncatemacro\firstnewface{\wb + 1}
   \pgfmathtruncatemacro\finalnewface{\wb + \outputs - 1}
   \ifthenelse{\outputs > 1}{
     \foreach \x in {\firstnewface,...,\finalnewface}{
       \storeface{\x}{\thenextfaceid}

       \storeminslice{\thenextfaceid}{\nextdiaglevel}

       \ifthenelse{\themaxfaceseen > \thenextfaceid}{}{
         \storefinalface{\thenextfaceid}{\thenextfaceid}
         \storefacecolor{\thenextfaceid}{eface}
         \stepcounter{maxfaceseen}
       }
       \stepcounter{nextfaceid}

     }
   }{}

   \pgfmathtruncatemacro\latestfaceid{\themaxfaceseen - 1}

    %% Update state
    \pgfmathtruncatemacro\maxstrandidx{\maxstrandidx + \horizoffset}
    \pgfmathtruncatemacro\diagramlevel{\diagramlevel + 1}

}
\newcommand\diagslicenovertex[3]{
    %% first argument: nb wires before
    %% second argument: nb wires input
    %% last argument: nb wires after
    \def\wb{#1}
    \def\inputs{#2}
    \def\outputs{#3}
    \pgfmathtruncatemacro\nextdiaglevel{\diagramlevel+1}
    \pgfmathtruncatemacro\horizoffset{\outputs - \inputs}
    \pgfmathsetmacro\nextoffset{\centeringoffset - 0.5*\horizoffset}
    \pgfmathtruncatemacro\toprightcorner{\wb + \inputs + 1}
    \pgfmathtruncatemacro\bottomrightcorner{\wb + \outputs + 1}
    \pgfmathsetmacro\vertexpos{0.5* (\centeringoffset + \wb + 0.5 + 0.5*\inputs) + 0.5*(\nextoffset + \wb + 0.5 + 0.5*\outputs)}

    \node (curvertex) at ($(\vertexpos, -0.5*\diagramlevel - 0.5*\nextdiaglevel)$) {};
    \node at (curvertex) (v\diagramlevel) {};

    % Draw edges before the vertex
    \ifthenelse{\wb=0}{}{
        \foreach \x in {1,...,\wb} {
          %\draw ($(\x + \centeringoffset,0 - \diagramlevel)$) .. controls ($(\x + \centeringoffset, -.5 - \diagramlevel)$) and ($(\x + \nextoffset, .5 - \nextdiaglevel)$) .. ($(\x + \nextoffset,0 - \nextdiaglevel)$);

          \expandedgepath{\x}{-- ($(\x + \nextoffset,0- \nextdiaglevel)$)};
        }
    }
    
    % Draw edges linked to the vertex, from above
    \ifthenelse{\inputs=0}{}{
      \foreach \x in {1,...,\inputs} {
         \pgfmathtruncatemacro\edgepos{\wb+\x}
         %\draw ($(\edgepos + \centeringoffset,0 - \diagramlevel)$) -- (curvertex);

         \expandedgepath{\edgepos}{-- ($(\vertexpos, -0.5*\diagramlevel - 0.5*\nextdiaglevel)$)};
       }
    }

    % Draw edges after the vertex
    \ifthenelse{\toprightcorner > \maxstrandidx}{}{
       \foreach \x in {\toprightcorner,...,\maxstrandidx} {
         %\draw ($(\x + \centeringoffset,0 - \diagramlevel)$) .. controls ($(\x + \centeringoffset, -.5 - \diagramlevel)$) and ($(\x + \horizoffset + \nextoffset, .5 - \nextdiaglevel)$) .. ($(\x + \horizoffset + \nextoffset,0 - \nextdiaglevel)$);
         \expandedgepath{\x}{-- ($(\x + \horizoffset + \nextoffset, 0 - \nextdiaglevel)$)};
         }

         % Shift edge ids
         \ifthenelse{\inputs > \outputs}{
           \foreach \x in {\toprightcorner,...,\maxstrandidx}{
              \pgfmathtruncatemacro\edgepos{\x + \horizoffset}
              \storeedgeid{\edgepos}{\getedgeid{\x}}
           }
         }{}
         \ifthenelse{\outputs > \inputs}{
           \foreach \x in {\maxstrandidx,...,\toprightcorner}{
              \pgfmathtruncatemacro\edgepos{\x + \horizoffset}
              \storeedgeid{\edgepos}{\getedgeid{\x}}
           }           
         }{}
    }

    % Draw edges linked to the vertex, from below
    \ifthenelse{\outputs=0}{}{
      \foreach \x in {1,...,\outputs} {
        \pgfmathtruncatemacro\edgepos{\wb + \x}
        %\draw (curvertex) -- ($(\edgepos + \nextoffset,0 - \nextdiaglevel)$);
        \storeedgeid{\edgepos}{\thenextedgeid}
        \storeedgepath{\thenextedgeid}{($(\vertexpos, -0.5*\diagramlevel - 0.5*\nextdiaglevel)$) -- ($(\edgepos + \nextoffset,0 - \nextdiaglevel)$)};
        \storeedgecolor{\thenextedgeid}{black}
        \stepcounter{nextedgeid}
       }
    }

    % Draw faces if needed
    \ifthenelse{\drawfaces=0}{}{
         % Special case for scalars, unfortunately…
         \ifthenelse{\inputs=0 \AND \outputs=0}{
            \pgfmathtruncatemacro\innertopleftcorner{\wb - 1}
            \pgfmathtruncatemacro\innertoprightcorner{\toprightcorner}

            \draw[eface] ($(\wb + \centeringoffset + .5, 0 - \diagramlevel)$) edge[bend left=40] ($(\wb + \nextoffset + .5, 0 - \nextdiaglevel)$);
            \draw[eface] ($(\wb + \centeringoffset + .5, 0 - \diagramlevel)$) edge[bend right=40] ($(\wb + \nextoffset + .5, 0 - \nextdiaglevel)$);
        }{
            \pgfmathtruncatemacro\innertopleftcorner{\wb}
            \pgfmathtruncatemacro\innertoprightcorner{\toprightcorner - 1}
        }

        % Before the vertex
        \foreach \x in {0,...,\innertopleftcorner} {
                 \draw[eface,\getfacecolor{\getfinalface{\getface{\x}}}] ($(\x + \centeringoffset + .5, 0 - \diagramlevel)$) -- ($(\x + \nextoffset + .5, 0 - \nextdiaglevel)$);
        }
         
         % After the vertex
         \foreach \x in {\innertoprightcorner,...,\maxstrandidx} {
            \draw[eface,\getfacecolor{\getfinalface{\getface{\x}}}] ($(\x + \centeringoffset + .5, 0 - \diagramlevel)$) -- ($(\x + \horizoffset + \nextoffset + .5, 0 - \nextdiaglevel)$);
         }

    }

    \scanslice{#1}{#2}{#3}
    \pgfmathsetmacro\centeringoffset{\nextoffset}

    % Place nodes for spots and places, even if they are not drawn
    \foreach \x in {0,...,\maxstrandidx} {
        \node at ($(\x+\centeringoffset + .5, 0-\diagramlevel)$) (spot_\diagramlevel_\x) {};
    }
    \foreach \x in {1,...,\maxstrandidx} {
      \node at ($(\x+\centeringoffset, 0-\diagramlevel)$) (place_\diagramlevel_\x) {};
    }
    \ifthenelse{\drawfaces=0}{}{
       \placesandspots
    }

}
\newcommand\diagslice[3]{
  \diagslicenovertex{#1}{#2}{#3}
  \node[bnode] at (curvertex) {};
}
\newcommand\placesandspots[0]{
    % Draws places and spots at the current slice
    \foreach \x in {0,...,\maxstrandidx} {
        \node[yspot,\getfacecolor{\getfinalface{\getface{\x}}}] at (spot_\diagramlevel_\x) {};
        
       %\node[node distance=.3cm,below left of=spot,red] {\small \getface{\x},\getfinalface{\getface{\x}}};

       \ifthenelse{\writefaceids=1 \AND \diagramlevel=\getminslice{\getface{\x}} \AND \getfinalface{\getface{\x}}=\getface{\x}}{
            \node[node distance=.25cm,above of=spot_\diagramlevel_\x] {\small \getface{\x}};
        }{}
    }

    \ifthenelse{\maxstrandidx=0 \OR \drawplaces=0}{}{
       \foreach \x in {1,...,\maxstrandidx} {
          \node[bplace] at (place_\diagramlevel_\x) {};
       }
    }
}
\title{The word problem for double categories}
\author{Antonin Delpeuch}
\address{Department of Computer Science \\ University of Oxford}
\date{\today}
\keywords{double categories, word problem, string diagrams}
\begin{document}

\maketitle

\begin{abstract}
  We solve the word problem for free double categories without
  equations between generators by translating it to the word problem
  for 2-categories. This yields a quadratic algorithm deciding the
  equality of diagrams in a free double category. The translation is
  of interest in its own right since and can for instance be used to
  reason about double categories with the language of 2-categories,
  sidestepping the pinwheel problem. It also shows that although
  double categories are formally more general than 2-categories, they are
  not actually more expressive, explaining the rarity of applications of this
  notion.
\end{abstract}

\section*{Introduction}

The combinatorial structure of double categories has
attracted a lot of attention since the notion was introduced by
\cite{ehresmann1963categories}.  Informally, one can describe double
categories by the shape of their string diagrams.  Unlike 2-categories
where the edges are required to flow along a specified
direction (usually vertically), 2-cells in double categories can
connect to both horizontal and vertical wires. Therefore they not only
have a vertical domain and codomain, but also a horizontal domain and
codomain. These definitions are made precise in
Section~\ref{sec:double-categories}.

\begin{figure}[H]
  \centering
  \begin{subfigure}{0.4\textwidth}
    \centering
    \begin{tikzpicture}[scale=1.5]
      \definecolor{c1}{RGB}{212,166,204}
      \definecolor{c2}{RGB}{152,209,162}
      \definecolor{c3}{RGB}{248,168,156}
      \definecolor{c4}{RGB}{243,237,155}
      \node[inner sep=0pt] at (.5,.5) (x) {};
      \path[fill=c1] (x) .. controls (.3,.2) and (.3,.2) .. (.3,0) -- (.7,0) .. controls (.7,.2) and (.7,.2) .. (x);

%      \path[fill=c3] (x) .. controls (.3,.8) and (.3,.8) .. (.3,1) -- (.7,1) .. controls (.7,.8) and (.7,.8) .. (x);

      \path[fill=c2] (x) .. controls (.3,.2) and (.3,.2) .. (.3,0) -- (0,0) -- (0,1) -- (.5,1) -- (x);

      \path[fill=c4] (x) .. controls (.7,.2) and (.7,.2) .. (.7,0) -- (1,0) -- (1,1) -- (.5,1) -- (x);

      \draw[decoration={markings,mark=at position 0.7 with {\arrow{>}}},postaction={decorate}] (x) .. controls (.3,.2) and (.3,.2) .. (.3,0);

      \draw[decoration={markings,mark=at position 0.7 with {\arrow{>}}},postaction={decorate}] (x) .. controls (.7,.2) and (.7,.2) .. (.7,0);

      \draw[decoration={markings,mark=at position 0.7 with {\arrow{<}}},postaction={decorate}] (x) -- (.5,1);
      \draw[gray] (0,0) -- (0,1) -- (1,1) -- (1,0) -- (0,0);

      \node[circle,fill=white,draw,inner sep=1.5pt,scale=.8] at (x) {$\alpha$};

    \end{tikzpicture}
    \caption{A morphism in a 2-category}
  \end{subfigure}
  \begin{subfigure}{0.45\textwidth}
    \centering
    \begin{tikzpicture}[scale=1.5]
      \definecolor{c1}{RGB}{212,166,204}
      \definecolor{c2}{RGB}{152,209,162}
      \definecolor{c3}{RGB}{248,168,156}
      \definecolor{c4}{RGB}{243,237,155}
      \definecolor{c5}{RGB}{190,190,192}
      \definecolor{c6}{RGB}{163,219,227}
      \definecolor{c7}{RGB}{221,164,119}
      \definecolor{c8}{RGB}{200,230,180}
      
      \node[inner sep=0pt] at (.5,.5) (x) {};
      \path[fill=c1] (x) .. controls (.3,.2) and (.3,.2) .. (.3,0) -- (.7,0) .. controls (.7,.2) and (.7,.2) .. (x);

      \path[fill=c3] (x) .. controls (.3,.8) and (.3,.8) .. (.3,1) -- (.7,1) .. controls (.7,.8) and (.7,.8) .. (x);

      \path[fill=c2] (x) .. controls (.3,.2) and (.3,.2) .. (.3,0) -- (0,0) -- (0,.5) -- (x);
      \path[fill=c4] (x) .. controls (.3,.8) and (.3,.8) .. (.3,1) -- (0,1) -- (0,.5) -- (x);

      \path[fill=c7] (x) .. controls (.7,.2) and (.7,.2) .. (.7,0) -- (1,0) -- (1,.3) .. controls (.8,.3) and (.8,.3) .. (x);
      \path[fill=c8] (x) .. controls (.7,.8) and (.7,.8) .. (.7,1) -- (1,1) -- (1,.7) .. controls (.8,.7) and (.8,.7) .. (x);

      \path[fill=c5] (x) .. controls (.8,.3) and (.8,.3) .. (1,.3) -- (1,.5) -- (x);

      \path[fill=c6] (x)  -- (1,.5) -- (1,.7) .. controls (.8,.7) and (.8,.7) .. (x);

      \tikzset{fwd/.style={decoration={markings,mark=at position 0.7 with {\arrow{>}}},postaction={decorate}}}
            \tikzset{fwdmid/.style={decoration={markings,mark=at position 0.5 with {\arrow{>}}},postaction={decorate}}}
      \tikzset{bwd/.style={decoration={markings,mark=at position 0.7 with {\arrow{<}}},postaction={decorate}}}
      
      \draw[fwd] (x) .. controls (.3,.2) and (.3,.2) .. (.3,0);
      \draw[fwdmid] (0,.5) -- (x);
            \draw[fwd] (x) -- (1,.5);

      \draw[bwd] (x) .. controls (.3,.8) and (.3,.8) .. (.3,1);

      \draw[fwd] (x) .. controls (.7,.2) and (.7,.2) .. (.7,0);

      \draw[bwd] (x) .. controls (.7,.8) and (.7,.8) .. (.7,1);
      %\draw[bwd] (x) .. controls (.2,.3) and (.2,.3) .. (0,.3);

      \draw[fwd] (x) .. controls (.8,.3) and (.8,.3) .. (1,.3);

      %\draw[bwd] (x) .. controls (.2,.7) and (.2,.7) .. (0,.7);

      \draw[fwd] (x) .. controls (.8,.7) and (.8,.7) .. (1,.7);

      \draw[gray] (0,0) -- (0,1) -- (1,1) -- (1,0) -- (0,0);

      \node[circle,fill=white,draw,inner sep=1.5pt,scale=.8] at (x) {$\alpha$};
    \end{tikzpicture}
    \caption{A morphism in a double category}
  \end{subfigure}
\end{figure}

At a first glance, double categories could be considered a more
natural categorical axiomatization of planar systems, since they treat
the two dimensions of the plane in a dual, interchangeable way. In
comparison, the vertical and horizontal compositions in 2-categories
are intrisically different, forcing diagrams to flow in a specified
direction.  However, this uniform behaviour in two dimensions comes at a cost
known as the \emph{pinwheel problem}.  Concretely, this problem manifests
itself in the fact that not all planar arrangements of 2-cells can be
composed, even if all local compatibility conditions are satisfied.
For a diagram to be interpreted as a 2-cell it must be binary
composable and this can fail if the diagram contains a so-called
pinwheel, represented later in Figure~\ref{fig:pinwheel}.

A lot of work has already been dedicated to characterizing which
arrangements of 2-cells can be composed in a double category, using
order-theoretic representations of these
arragements~\citep{dawson1993characterizing,dawson1995forbiddensuborder}.
In this work, we focus instead on the word problem for 2-cells in
double categories. Given two binary composable diagrams, we want to
determine whether they represent the same 2-cell or not. \cite{dawson2004free}
have studied this problem in the case of free extensions of double categories,
showing for instance that the word problem can become undecidable with the addition
of a single free 2-cell.

We study the word problem for free double categories, meaning that
no equations are imposed on the generators. The only equations
relating expressions in this context are the axioms of double categories.
We introduce a correspondence between a free double category and a free
2-category, for which the word problem is
solved~\citep{delpeuch2018normalization}. We obtain as a result a
quadratic time algorithm to determine if two double category diagrams
are equivalent (Theorem~\ref{thm:word-problem}).

It might be worth explaining why this word problem is of interest.
Word problems have not attracted much attention in category theory so far,
perhaps because they are wrongly seen as computational problems of little relevance
to mathematics. In fact, studying the word problem for an algebraic structure
often surfaces profound algebraic properties of that structure, and is 
key to understanding its combinatorics. For instance, \citet{squier1987word}
established a correspondence between the existence of a convergent presentation
for a monoid and its homological type. Group theory in another example:
the notion of \emph{automatic group}~\citep{epstein1992word} was originally
motivated by computational properties, but turned out to have a simple
characterization in terms of Cayley graphs (ibid.). It has found fruitful applications
to the braid group~\citep{charney1992artin} and mapping class groups in general~\citep{mosher1995mapping},
among others.

Our solution to the word problem for double categories relies on a
reduction to the word problem for 2-categories. Here again, the translation used
is of its own interest, as it establishes a tight relation between the
combinatorics of 2-categories and that of double categories. In fact, we
will argue in Section~\ref{sec:pinwheel} that free 2-categories should
be preferred to free double categories, as they are simpler, equally expressive
and do not suffer from the pinwheel problem.

The idea of the correspondence is very simple. In order to simulate
the horizontal wires of a double category in a 2-category, we simply
``rotate the string diagrams by $\frac{\pi}{4}$''.  In
Section~\ref{sec:translation}, we make this correspondence precise and
show that it respects the notions of equivalences on both structures.
This lets us solve the word problem for free double categories in
Section~\ref{sec:word-problem}.

\begin{center}
\begin{tikzpicture}[scale=1.4]
  \definecolor{c1}{RGB}{212,166,204}
  \definecolor{c2}{RGB}{152,209,162}
  \definecolor{c3}{RGB}{248,168,156}
  \definecolor{c4}{RGB}{243,237,155}
  \tikzset{fwd/.style={decoration={markings,mark=at position 0.7 with {\arrow{>}}},postaction={decorate}}}
  \tikzset{bwd/.style={decoration={markings,mark=at position 0.5 with {\arrow{<}}},postaction={decorate}}}

  \path[fill=c1] (0,0) rectangle (.5,.5);
  \path[fill=c2] (.5,0) rectangle (1,.5);
  \path[fill=c3] (0,.5) rectangle (.5,1);
  \path[fill=c4] (.5,.5) rectangle (1,1);
  \draw[gray] (0,0) rectangle (1,1);
  \node[circle,draw,inner sep=1.5pt,fill=white] at (.5,.5) (alpha) {$\alpha$};
  \draw[fwd] (0,.5) -- (alpha);
  \draw[fwd] (alpha) -- (1,.5);
  \draw[bwd] (.5,0) -- (alpha);
  \draw[bwd] (alpha) -- (.5,1);

  \node[scale=1.3] at (2.5,.5) {$\xmapsto{t}$};

  \begin{scope}[xshift=4cm]
    \path[fill=c1] (0,0) -- (.25,0) -- (.5,.5) -- (.25,1) -- (0,1) -- (0,0);
    \path[fill=c3] (.25,1) -- (.5,.5) -- (.75,1);
    \path[fill=c4] (.75,1) -- (1,1) -- (1,0) -- (.75,0) -- (.5,.5);
    \path[fill=c2] (.75,0) -- (.5,.5) -- (.25,0);
    \draw[gray] (0,0) rectangle (1,1);
    \node[circle,draw,inner sep=1.5pt,fill=white] at (.5,.5) (alpha) {$\alpha$};
    \draw[fwd] (.25,1) -- (alpha);
    \draw[fwd] (alpha) -- (.75,0);
    \draw[bwd] (.25,0) -- (alpha);
    \draw[bwd] (alpha) -- (.75,1);
  \end{scope}
\end{tikzpicture}
\end{center}

This correspondence between free double categories and free
2-categories is motivated by the word problem but is of interest in
its own right: it shows that one does not gain much by considering a
free double category instead of the corresponding free
2-category. Reasoning in a 2-category avoids the pinwheel problem
entirely as the validity of a string diagram in this structure can be
checked locally. Section~\ref{sec:pinwheel} shows how the translation
could be extended to diagrams which include pinwheels, giving them
a meaning in the free 2-category.
This has also practical implications: one can use the
translation to reason about double categories in proof assistants such
as \href{https://homotopy.io/}{homotopy.io}~\citep{homotopyio-tool}
which use a globular notion of n-category.

\section*{Acknowledgements}

We thank Jamie Vicary, Jules Hedges, Robert Paré, Dorette Pronk and the anonymous reviewer for their feedback on this work.
The author is supported by an EPSRC studentship.

\section{Double categories} \label{sec:double-categories}

\begin{definition}
  Let $\cC$ be a category. An \textbf{internal category} in $\cC$
  consists of the following data:
  \begin{itemize}
   \item a pair of objects $M, O$, that we think of as the sets of morphisms and objects
   \item morphisms $d,c \in \cC(M,O)$, intuitively the domain and codomain functions
   \item a morphism $\iota \in \cC(O,M)$, taking an object to its identity map;
   \item a morphism $\mu \in \cC(P,M)$, where $P$ is the pullback
     (which is therefore required to exist) of $M \xrightarrow{d} O \xleftarrow{c} M$.
     This represents the multiplication of compatible pairs of morphisms.
  \end{itemize}
  These morphisms are required to satisfy equalities, which correspond to the axioms
  of a category (associativity and unitality of composition, as well as equations for the domains
  and codomains of identities and composites).
\end{definition}

The definition above is chosen such that an internal category in
$\mathbf{Set}$ is a small category. The purpose of this concept is
that its generality makes it possible to interpret it in other
categories. 

\begin{definition}
  A \textbf{double category} is an internal category in
  $\mathbf{Cat}$, the category of small categories.
\end{definition}

This definition is concise and this conciseness justifies the interest
in this structure, which was originally introduced
by~\cite{ehresmann1963categories}. However, it is of little help to
build intuition about the nature of such an object, so let us unfold
its content. A double category consists of an object category
$\mathcal{O}$ and a morphisms category $\mathcal{M}$, with functors
$D, C : \mathcal{M} \rightarrow \mathcal{O}$, $I : \mathcal{O}
\rightarrow \mathcal{M}$ and $M : \mathcal{P} \rightarrow \mathcal{M}$
where $\mathcal{P}$ is defined as above.  We will call
\begin{itemize}
\item objects of $\mathcal{O}$ as \textbf{objects} of the double category;
\item morphisms of $\mathcal{O}$ as \textbf{vertical morphisms} of the double category;
\item objects of $\mathcal{M}$ as \textbf{horizontal morphisms} of the double category;
\item morphisms of $\mathcal{M}$ as \textbf{2-cells} of the double category.
\end{itemize}

Initially, it can seem confusing that objects of $\mathcal{M}$ are
thought of as morphisms.  The reason for this is that by forgetting
morphisms, i.e. taking the image of our internal category via the
forgetful functor $\mathbf{Cat} \rightarrow \mathbf{Set}$, we obtain
an internal category in $\mathbf{Set}$, i.e. a small category. We will
call this the \textbf{horizontal category} of the double category. As
its morphisms are the objects of $\mathcal{M}$, this justifies their name.
These horizontal morphisms have as domains and codomains objects of $\mathcal{O}$.
These objects are also involved in another category, namely $\mathcal{O}$
itself, that we will call the \textbf{vertical category} of the double category.

Any 2-cell $\alpha$ has two horizontal morphisms as domain and
codomain, $\text{dom}_{\mathcal{M}}(\alpha)$ and
$\text{cod}_{\mathcal{M}}(\alpha)$ as a morphism of $\mathcal{M}$. We
will call these the \textbf{horizontal domain} and \textbf{codomain}
of $\alpha$.  Furthermore, it is associated by the internal category
structure to $D(\alpha)$ and $C(\alpha)$, which are vertical
morphisms. We will therefore call these the \textbf{vertical domain}
and \textbf{codomain} of $\alpha$. Finally, the functoriality of $D$ and $C$
ensures that for instance $D(\dom_{\mathcal{M}}(\alpha)) = \dom_{\mathcal{O}}(D(\alpha))$ and similarly
for $C$ and $\cod$. This suggests the representation of $\alpha$ as
a square:

\begin{figure}[H]
  \centering
  \begin{tikzpicture}[scale=2.5]

    \node at (0,1) (a) {$A$};
    \node at (1,1) (b) {$B$};
    \node at (0,0) (c) {$E$};
    \node at (1,0) (d) {$F$};

    \node at (.5,.5) (alpha) {$\alpha$};

    \draw[-latex] (a) edge node[above] {$\dom_{\mathcal{M}}(\alpha)$} (b);
    \draw[-latex] (c) edge node[below] {$\cod_{\mathcal{M}}(\alpha)$} (d);
    \draw[-latex] (a) edge node[left] {$D(\alpha)$} (c);
    \draw[-latex] (b) edge node[right] {$C(\alpha)$} (d);
  \end{tikzpicture}
\end{figure}
Although this diagram is similar to commutative diagrams used in category theory,
we stress that it is here used in a more general sense, as composing horizontal and
vertical morphisms does not make sense in general.

The 2-cells in a double category can be composed in two different ways.
First, as morphisms of $\mathcal{M}$, two 2-cells can be composed if they
have compatible horizontal domain and codomain. We call this the \textbf{vertical composition}.
Second, the functor $M$
defines a composition for 2-cells with compatible vertical domains and codomain, and we call this
the \textbf{horizontal composition}. These compositions can be represented with diagrams:
\begin{figure}[H]
  \centering
\begin{tikzpicture}[scale=1.8,every node/.style={scale=0.8}]

  \node at (0,1) (a) {$A$};
  \node at (1,1) (b) {$B$};
  \node at (0,0) (c) {$U$};
  \node at (1,0) (d) {$V$};

  \node at (.5,.5) (alpha) {$\alpha \circ \beta$};

  \draw[-latex] (a) edge node[above] {$\dom(\alpha \circ \beta)$} (b);
  \draw[-latex] (c) edge node[below] {$\cod(\alpha \circ \beta)$} (d);
  \draw[-latex] (a) edge node[left] {$D(\alpha \circ \beta)$} (c);
  \draw[-latex] (b) edge node[right] {$C(\alpha \circ \beta)$} (d);

  \node at (2,.5) {$=$};

  \begin{scope}[xshift=2.6cm,yshift=.5cm]
    \node at (0,1) (a) {$A$};
    \node at (1,1) (b) {$B$};
    \node at (0,0) (c) {$E$};
    \node at (1,0) (d) {$F$};
    \node at (0,-1) (u) {$U$};
    \node at (1,-1) (v) {$V$};

    \node at (.5,.5) (alpha) {$\beta$};
    \node at (.5,-.5) (beta) {$\alpha$};

    \draw[-latex] (a) edge node[above] {$\dom(\beta)$} (b);
    \draw[-latex] (u) edge node[below] {$\cod(\alpha)$} (v);
    \draw[-latex] (c) edge (d);
    \draw[-latex] (a) edge node[left] {$D(\beta)$} (c);
    \draw[-latex] (b) edge node[right] {$C(\beta)$} (d);
    \draw[-latex] (c) edge node[left] {$D(\alpha)$} (u);
    \draw[-latex] (d) edge node[right] {$C(\alpha)$} (v);

  \end{scope}
\end{tikzpicture}
\begin{tikzpicture}[scale=1.8,every node/.style={scale=0.8}]

  \node at (0,1) (a) {$A$};
  \node at (1,1) (b) {$B$};
  \node at (0,0) (c) {$U$};
  \node at (1,0) (d) {$V$};

  \node at (.5,.5) (alpha) {$M(\alpha, \beta)$};

  \draw[-latex] (a) edge node[above] {$\dom(M(\alpha, \beta))$} (b);
  \draw[-latex] (c) edge node[below] {$\cod(M(\alpha, \beta))$} (d);
  \draw[-latex] (a) edge node[left] {$D(M(\alpha, \beta))$} (c);
  \draw[-latex] (b) edge node[right] {$C(M(\alpha, \beta))$} (d);

  \node at (2.1,.5) {$=$};

  \begin{scope}[xshift=2.8cm]
    \node at (0,1) (a) {$A$};
    \node at (1,1) (b) {$B$};
    \node at (0,0) (c) {$E$};
    \node at (1,0) (d) {$F$};
    \node at (2,1) (u) {$U$};
    \node at (2,0) (v) {$V$};

    \node at (.5,.5) (alpha) {$\alpha$};
    \node at (1.5,.5) (beta) {$\beta$};

    \draw[-latex] (a) edge node[above] {$\dom(\alpha)$} (b);
    \draw[-latex] (c) edge node[below] {$\cod(\alpha)$} (d);
    \draw[-latex] (a) edge node[left] {$D(\alpha)$} (c);
    \draw[-latex] (b) edge (d);
    \draw[-latex] (u) edge node[right] {$C(\beta)$} (v);
    \draw[-latex] (b) edge node[above] {$\dom(\beta)$} (u);
    \draw[-latex] (d) edge node[below] {$\cod(\beta)$} (v);

  \end{scope}
\end{tikzpicture}
\end{figure}

The functoriality of $M$ ensures that these two compositions are compatible: $(\alpha \star \delta) \circ (\beta \star \gamma) = (\alpha \circ \beta) \star (\delta \circ \gamma)$ for all 2-cells such that both sides of the equation are defined. This means that the following diagram is unambiguous:
\[
\begin{tikzpicture}[scale=1.3]
  \node at (0,2) (a) {$A$};
  \node at (1,2) (b) {$B$};
  \node at (2,2) (c) {$C$};
  \node at (0,1) (d) {$D$};
  \node at (1,1) (e) {$E$};
  \node at (2,1) (f) {$F$};
  \node at (0,0) (g) {$G$};
  \node at (1,0) (h) {$H$};
  \node at (2,0) (i) {$I$};

  \draw[-latex] (a) -- (b);
  \draw[-latex] (b) -- (c);
  \draw[-latex] (d) -- (e);
  \draw[-latex] (e) -- (f);
  \draw[-latex] (g) -- (h);
  \draw[-latex] (h) -- (i);

  \draw[-latex] (a) -- (d);
  \draw[-latex] (d) -- (g);
  \draw[-latex] (b) -- (e);
  \draw[-latex] (e) -- (h);
  \draw[-latex] (c) -- (f);
  \draw[-latex] (f) -- (i);

  \node at (.5,1.5) {$\beta$};
  \node at (1.5,1.5) {$\gamma$};
  \node at (.5,.5) {$\alpha$};
  \node at (1.5,.5) {$\delta$};
\end{tikzpicture}
\]

Given that the horizontal and vertical compositions are also associative,
it is natural to represent composite 2-cells as tilings of rectangles in the plane,
with the appropriate conditions on edges to ensure compatibility between the composed
2-cells. \cite{dawson1993general} have shown that if there are two ways to interpret
such a tiling as a tree of horizontal and vertical compositions, then the resulting
2-cells will be equal. However, there exist tilings which satisfy the local compatibility
conditions but do not arise from the horizontal and vertical compositions. The minimal example
of this is known as the \emph{pinwheel} and is shown in Figure~\ref{fig:pinwheel}.

\begin{figure}
  \centering
\begin{tikzpicture}[scale=0.8]
  \draw (0,0) -- (0,3) -- (3,3) -- (3,0) -- (0,0);
  \draw (0,2) -- (2,2);
  \draw (2,3) -- (2,1);
  \draw (1,1) -- (3,1);
  \draw (1,0) -- (1,2);
\end{tikzpicture}
  \caption{A pinwheel diagram, which cannot be expressed as a binary composite}
  \label{fig:pinwheel}
\end{figure}
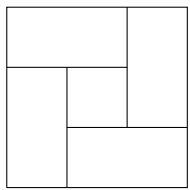
\noindent 
It was then shown by~\cite{dawson1995forbiddensuborder} that this is essentially the only obstacle to
composition of diagrams in double categories.

\section{Free double categories}

Double categories are rich objects and defining them therefore requires
some care. Given horizontal and vertical categories with the same
objects, and a set of generating tiles whose boundaries are chosen
from the horizontal and vertical categories, we want to generate the
free double category on these tiles.

One simple approach to generate such a double category would be to use
its definition as internal category object in $\Cat$, and simply
internalize the definition of a free category on a graph. A graph
object in $\Cat$ is called a \textbf{double graph} and is essentially
a double category without identities and compositions.  Interpreted in
$\Cat$, the construction which defines a free category object from a
graph object does give a double category, but as pointed out by
\cite{dawson2002what} this imposes important restrictions on the
boundaries of the generating tiles: they must be generating morphisms
of the resulting vertical and horizontal categories. Therefore, all
generated composites have a grid-like shape:
\begin{figure}[H]
  \centering
\begin{tikzpicture}[scale=1,yscale=-1]
  \node at (0,0) (p-0-0) {};
  \foreach \x in {1,...,6} {
    \node at (\x,0) (p-\x-0) {};
    \draw[->] ($(p-\x-0)+(-.9,0)$) -- (p-\x-0);
  }
  \foreach \y in {1,...,3} {
    \node at (0,\y) (p-0-\y) {};
    \draw[->] ($(p-0-\y)+(0,-.9)$) -- (p-0-\y);
  }
  \foreach \x in {1,...,6} {
    \foreach \y in {1,...,3} {
          \node at (\x,\y) (p-\x-\y) {};
          \draw[->] ($(p-\x-\y)+(-.9,0)$) -- (p-\x-\y);
          \draw[->] ($(p-\x-\y)+(0,-.9)$) -- (p-\x-\y);
    }
  }
\end{tikzpicture}
\end{figure}
\cite{dawson2002what} propose a more general construction which allows
identities as cell boundaries. To do so they use the notion of
reflexive graph: it is a directed graph with designated loops on each
vertex. One can define the free category generated by a reflexive
graph, where the loops are interpreted as identities. Internalized in
$\Cat$, this gives rise to the notion of \textbf{double reflexive
  graph} which generates a double category.  This makes it possible to
use generators which have identities as boundaries:
\begin{figure}[H]
  \centering
\begin{tikzpicture}[scale=1,yscale=-1]
  \node at (0,0) (p-0-0) {};
  \foreach \x in {1,...,4} {
    \node at (\x,0) (p-\x-0) {};
    \draw[->] ($(p-\x-0)+(-.9,0)$) -- (p-\x-0);
  }
  \foreach \y in {1,...,3} {
    \node at (0,\y) (p-0-\y) {};
    \draw[->] ($(p-0-\y)+(0,-.9)$) -- (p-0-\y);
  }
  \foreach \x/\y in {1/1,1/2,1/3,2/1,2/2,2/3,3/1,3/2,3/3,4/1,4/2,4/3,5/1,5/2,5/3} {
      \node at (\x,\y) (p-\x-\y) {};
  }
  \foreach \x/\y in {1/1,1/2,4/1,4/2,4/3,5/1,5/2,5/3} {
          \draw[->] ($(p-\x-\y)+(-.9,0)$) -- (p-\x-\y);
  
  }
  \draw[->] ($(p-3-2)+(-1.9,0)$) -- (p-3-2);
  \draw[->] ($(p-3-3)+(-1.9,0)$) -- (p-3-3);
  
  \foreach \x/\y in {1/2,1/3,2/1,3/1,3/2,3/3,4/1,4/2,4/3,5/2,5/3} {
    \draw[->] ($(p-\x-\y)+(0,-.9)$) -- (p-\x-\y);
  }

  \foreach \x/\y in {5/0,2/1,3/1,1/3} {
    \draw[double distance=1.5pt] ($(p-\x-\y)+(-.9,0)$) -- (p-\x-\y);
  }
  \foreach \x/\y in {1/1,5/1} {
    \draw[double distance=1.5pt] ($(p-\x-\y)+(0,-.9)$) -- (p-\x-\y);
  }

\end{tikzpicture}
\end{figure}
As this is still not as general as it could be, \cite{fiore2008model}
introduces the notion of \textbf{double derivation scheme}. A double
derivation scheme is a double graph whose horizontal and vertical
objects form categories. Therefore, generating a double category from
a double derivation scheme makes it possible to use arbitrary
boundaries for the generating cells. The main difference with the
previous approaches is that the notion of double derivation scheme
does not arise by internalizing in $\Cat$ a notion formulated in the
internal language of categories. Moreover a double derivation scheme can also introduce algebraic equations between expressions, quotienting the generated structure accordingly. In our case, no such equations are used, so we give a simpler description of the construction.

\begin{definition}
  A \textbf{double signature} $S = (A,H,V,C)$ is given by:
  \begin{itemize}
  \item a set of objects $A$;
  \item a set of generating horizontal
    morphisms $H$;
  \item a set of generating vertical morphisms $V$;
  \item a set of generating 2-cells $C$.
  \end{itemize}
  Furthermore each $h \in H$ is associated with $\dom h, \cod h \in A$
and similarly for $V$.  This defines free categories $H^*$ and $V^*$.
Each $\alpha \in C$ is associated with compatible vertical and
horizontal domains and codomains $\domh \alpha, \codh \alpha \in H^*$
and $\domv \alpha, \codv \alpha \in V^*$. The required compatibility
is $\dom \domh \alpha = \dom \domv \alpha$ and three other similar equations.
\end{definition}

The set of 2-cells of the double category generated by this data is
generated inductively from the generators in $C$, vertical and
horizontal identities.  From these generators we take the closure by
vertical and horizontal composition of compatible cells: this gives us
the set of 2-cell expressions on the signature. To obtain the set of
2-cells, we quotient by unitality and associativity of the vertical
and horizontal compositions and by the exchange law. Furthermore,
horizontal and vertical identities on identity morphisms (depicted as
empty 2-cells) are equated.  These are precisely the laws of double
categories, hence this defines the free double category $S_d$ on the
given data.

Expressions in double categories
can be drawn as string diagrams~\citep{myers2016string}, and in the
sequel we will use the terms ``expression'' and ``diagram'' interchangeably.
Here is an example of a series of equivalences between expressions of
2-cells, drawn as string diagrams:
\[
\begin{tikzpicture}[scale=.75]
  % 0
  \draw[gray] (0,0) rectangle (2,2);
  \draw[gray] (1,0) -- (1,2);
  \draw[gray] (0,1) -- (2,1);
  \node[bnode] at (.5,.5) (a) {};
  \node[bnode] at (.5,1.5) (b) {};
  \node[bnode] at (1.5,.5) (c) {};
  \node[bnode] at (1.5,1.5) (d) {};
  \draw (a) -- (b);
  \draw (c) -- (d);
  \draw[red] (0,1.5) -- (b);
  \draw[red] (2,.5) -- (c);

  \node at (2.5,1) {$\sim$};

  % 1
  \begin{scope}[xshift=3cm,yshift=-1cm]
    \draw[gray] (0,0) rectangle (2,4);
    \draw[gray] (0,2) -- (2,2);
    \draw[gray] (1,0) -- (1,4);
    \draw[gray] (0,3) -- (1,3);
    \draw[gray] (1,1) -- (2,1);
    \node[bnode] at (.5,2.5) (a) {};
    \node[bnode] at (.5,3.5) (b) {};
    \node[bnode] at (1.5,.5) (c) {};
    \node[bnode] at (1.5,1.5) (d) {};
    \draw (a) -- (b);
    \draw (c) -- (d);
    \draw[red] (0,3.5) -- (b);
    \draw[red] (2,.5) -- (c);

    \node at (2.5,2) {$\sim$};
  \end{scope}

  % 2
  \begin{scope}[xshift=6cm,yshift=-1cm]
    \draw[gray] (0,0) rectangle (1,4);
    \draw[gray] (0,1) -- (1,1);
    \draw[gray] (0,2) -- (1,2);
    \draw[gray] (0,3) -- (1,3);
    \node[bnode] at (.5,2.5) (a) {};
    \node[bnode] at (.5,3.5) (b) {};
    \node[bnode] at (.5,.5) (c) {};
    \node[bnode] at (.5,1.5) (d) {};
    \draw (a) -- (b);
    \draw (c) -- (d);
    \draw[red] (0,3.5) -- (b);
    \draw[red] (1,.5) -- (c);

    \node at (1.5,2) {$\sim$};
  \end{scope}

  % 3
  \begin{scope}[xshift=8cm,yshift=-1cm]
    \draw[gray] (0,0) rectangle (2,4);
    \draw[gray] (0,1) -- (2,1);
    \draw[gray] (0,2) -- (2,2);
    \draw[gray] (0,3) -- (2,3);
    \draw[gray] (1,1) -- (1,3);
    \node[bnode] at (1.5,2.5) (a) {};
    \node[bnode] at (1.5,3.5) (b) {};
    \node[bnode] at (.5,.5) (c) {};
    \node[bnode] at (.5,1.5) (d) {};
    \draw (a) -- (b);
    \draw (c) -- (d);
    \draw[red] (0,3.5) -- (b);
    \draw[red] (2,.5) -- (c);

    \node at (2.5,2) {$\sim$};
  \end{scope}

  % 4
  \begin{scope}[xshift=11cm,yshift=-.5cm]
    \draw[gray] (0,0) rectangle (2,3);
    \draw[gray] (0,1) -- (2,1);
    \draw[gray] (0,2) -- (2,2);
    \draw[gray] (1,1) -- (1,2);
    \node[bnode] at (1.5,1.5) (a) {};
    \node[bnode] at (1.5,2.5) (b) {};
    \node[bnode] at (.5,.5) (c) {};
    \node[bnode] at (.5,1.5) (d) {};
    \draw (a) -- (b);
    \draw (c) -- (d);
    \draw[red] (0,2.5) -- (b);
    \draw[red] (2,.5) -- (c);
  \end{scope}
\end{tikzpicture}
\]
We draw horizontal wires in red, this will help us to to distinguish
them from vertical wires in the next section. We also ommit region
colors as they are irrelevant for equivalences and do not play any
role in the word problem.

Our goal in this work is
to propose an alternate representation for 2-cells, making it possible
to decide whether two expressions of 2-cells are equivalent under these
axioms.

\section{Translation to 2-categories} \label{sec:translation}

Double categories can be seen as a generalization of 2-categories, as
a 2-category is a double category where all vertical morphisms are identities.
Given the inherent duality in double categories, a 2-category can also be seen
as a double category with identity horizontal morphisms.

In this section, we show how a free double category can conversely give
rise to a free 2-category. Our goal is to reuse known algorithms for the
word problem in 2-categories~\citep{delpeuch2018normalization} for
double categories.  To that end we use the string diagram calculus for
2-categories~\citep{selinger2011survey}.

\begin{definition}
  Given a double signature $S = (A,H,V,C)$, we define the 2-category $S_2$
  as the free 2-category generated by:
  \begin{itemize}
  \item objects $a \in A$;
  \item 1-morphisms $h : \dom h \rightarrow \cod h$ for $h \in H$ and $v^\op : \cod v \rightarrow \dom v$ for $v \in H$;
  \item 2-morphisms $\alpha : \domh \alpha \circ (\domv \alpha)^\op \rightarrow (\codv \alpha)^\op \circ \codh \alpha$
  \end{itemize}
\end{definition}
\noindent Note that the vertical generators are reversed in the 2-category,
making it possible to compose the horizontal and vertical domains
together, and similarly for the codomain.

\begin{definition}
  Let $\phi$ be a 2-cell expression in $S_d$. We inductively define its translation $t(\phi)$
  as a morphism in $S_2(\domh \phi \circ (\domv \phi)^\op, (\codv \phi)^\op \circ \codh \phi)$:
  
  \begin{tabular}{l l}
    \begin{tikzpicture}
  \draw[gray] (0,0) rectangle (1,1);
  \node[circle,draw,inner sep=2pt] at (.5,.5) (alpha) {$\alpha$};
  \draw[red] (0,.5) -- (alpha) -- (1,.5);
  \draw (.5,0) -- (alpha) -- (.5,1);

  \node at (3,.5) {$\mapsto$};

  \begin{scope}[xshift=4cm]
  \draw[gray] (0,0) rectangle (1,1);
  \node[circle,draw,inner sep=2pt] at (.5,.5) (alpha) {$\alpha$};
  \draw[red] (.25,1) -- (alpha) -- (.75,0);
  \draw (.25,0) -- (alpha) -- (.75,1);
  \end{scope}
\end{tikzpicture} & \text{generator} \\
    \begin{tikzpicture}
  \draw[gray] (0,0) rectangle (2,1);
  \node[circle,draw,inner sep=1.5pt] at (.5,.5) (mu) {$\mu$};
  \node[circle,draw,inner sep=2pt] at (1.5,.5) (nu) {$\nu$};
  \draw[red] (0,.5) -- (mu) -- (nu) -- (2,.5);
  \draw (.5,0) -- (mu) -- (.5,1);
  \draw (1.5,0) -- (nu) -- (1.5,1);

  \node at (3,.5) {$\mapsto$};

  \begin{scope}[xshift=4cm,yshift=-.5cm]
  \draw[gray] (0,0) rectangle (2,2);
  \node[circle,draw,inner sep=1.5pt] at (.66,1.5) (mu) {$\mu$};
  \node[circle,draw,inner sep=2pt] at (1.33,.5) (nu) {$\nu$};
  \draw[red] (.25,2) -- (mu) -- (nu) -- (1.75,0);
  \draw (.25,0) -- (mu) -- (.8,2);
  \draw (1.2,0) -- (nu) -- (1.75,2);
  \end{scope}
\end{tikzpicture} & \text{horizontal composition} \\
   \begin{tikzpicture}
  \begin{scope}[xshift=1cm,yshift=-.5cm,rotate=90]
  \draw[gray] (0,0) rectangle (2,1);
  \node[circle,draw,inner sep=2pt] at (.5,.5) (mu) {$\nu$};
  \node[circle,draw,inner sep=1.5pt] at (1.5,.5) (nu) {$\mu$};
  \draw (0,.5) -- (mu) -- (nu) -- (2,.5);
  \draw[red] (.5,0) -- (mu) -- (.5,1);
  \draw[red] (1.5,0) -- (nu) -- (1.5,1);
  \end{scope}

  \node at (3,.5) {$\mapsto$};

  \begin{scope}[xshift=7cm,yshift=-.5cm,xshift=-1cm,xscale=-1]
  \draw[gray] (0,0) rectangle (2,2);
  \node[circle,draw,inner sep=1.5pt] at (.66,1.5) (mu) {$\mu$};
  \node[circle,draw,inner sep=2pt] at (1.33,.5) (nu) {$\nu$};
  \draw (.25,2) -- (mu) -- (nu) -- (1.75,0);
  \draw[red] (.25,0) -- (mu) -- (.8,2);
  \draw[red] (1.2,0) -- (nu) -- (1.75,2);
  \end{scope}
\end{tikzpicture} & \text{vertical composition} \\
    \begin{tikzpicture}
  \draw[gray] (0,0) rectangle (1,1);

  \draw[red] (0,.5) -- (1,.5);
  %\draw (.5,0) -- (.5,1);

  \node at (3,.5) {$\mapsto$};

  \begin{scope}[xshift=4cm]
  \draw[gray] (0,0) rectangle (1,1);
  \draw[red] (.5,1) -- (.5,0);
  %\draw (.25,0) -- (.75,1);
  \end{scope}
\end{tikzpicture}
 & \text{horizontal identity} \\
   \begin{tikzpicture}
  \draw[gray] (0,0) rectangle (1,1);

  %\draw[red] (0,.5) -- (1,.5);
  \draw (.5,0) -- (.5,1);

  \node at (3,.5) {$\mapsto$};

  \begin{scope}[xshift=4cm]
  \draw[gray] (0,0) rectangle (1,1);
  %\draw[red] (.5,1) -- (.5,0);
  \draw (.5,0) -- (.5,1);
  \end{scope}
\end{tikzpicture} & \text{vertical identity}
  \end{tabular}
\end{definition}

\begin{lemma}
  The translation $t$ respects the axioms of double categories, i.e. it extends
  to a map from 2-cells in $S_d$ to 2-cells in $S_2$.
\end{lemma}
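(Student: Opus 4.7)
The plan is to verify that the inductive definition of $t$ respects each defining equation among the double category axioms, so that it descends from 2-cell expressions to actual 2-cells of $S_d$. Since $t$ is defined compositionally, it suffices to check that $t$ equates both sides of each axiom in $S_2$. The axioms to verify are: associativity and unitality of horizontal composition $\star$; the analogous axioms for vertical composition $\circ$; the exchange law $(\alpha \star \delta) \circ (\beta \star \gamma) = (\alpha \circ \beta) \star (\delta \circ \gamma)$; and the identification of the horizontal identity on an identity horizontal morphism with the vertical identity on an identity vertical morphism.

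For associativity and unitality of both compositions, I would unfold the definition of $t$ on both sides and appeal to associativity and unitality of 1-cell composition in $S_2$ (to reassociate the boundary words of the translated cells), together with the analogous axioms for vertical 2-cell composition. The identities in $S_d$ translate, by construction, to identity 2-cells on composite 1-cells in $S_2$, so unitality is immediate. For the ``empty identity'' axiom, both the horizontal identity on an identity horizontal morphism and the vertical identity on an identity vertical morphism translate to the identity 2-cell on an identity 1-cell in $S_2$, which is manifestly the same morphism.

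The substantive case, and the expected main obstacle, is the exchange law. Both sides translate to 2-cells in $S_2$ built by whiskering the four translated cells $t(\alpha), t(\beta), t(\gamma), t(\delta)$ with the appropriate boundary 1-cells and then composing vertically; they correspond, under the ``$\pi/4$ rotation'' picture, to the two natural ways of combining four 2-cells arranged in a square via row-then-column versus column-then-row compositions in $S_2$. Their equality is precisely the content of the 2-categorical interchange law, i.e.\ the bifunctoriality of 2-cell composition, which holds in any free 2-category. Concretely I would introduce notation for the eight boundary 1-cells on the perimeter of the square together with the two interior shared sides, expand both composites explicitly using the inductive clauses for $t$, and then apply interchange to conclude. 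Once all axioms are verified, the universal property of the quotient defining 2-cells of $S_d$ from 2-cell expressions yields a well-defined map $t: S_d(\cdot,\cdot) \to S_2(\cdot,\cdot)$ as required.
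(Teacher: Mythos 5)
Your proposal is correct and follows essentially the same route as the paper: the paper's proof likewise reduces unitality and associativity to routine checks and observes that the translated exchange law is exactly an instance of the 2-categorical interchange law (illustrated there by a picture of the two translated composites differing by sliding two non-interacting generators past each other). Your additional explicit treatment of the ``empty identity'' axiom is a welcome detail the paper leaves implicit, but it does not constitute a different method.
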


\begin{proof}
  One can check that unitality and associativity are respected. The exchange law
  in double categories translates to the exchange law in 2-categories:
  \begin{center}
\begin{tikzpicture}
  \draw[gray] (0,0) rectangle (2,2);
  \node[circle,draw,inner sep=1.5pt] at (.5,.5) (mu) {$\mu$};
  \node[circle,draw,inner sep=2pt] at (1.5,.5) (nu) {$\nu$};
  \node[circle,draw,inner sep=2pt] at (.5,1.5) (alpha) {$\alpha$};
  \node[circle,draw,inner sep=1pt] at (1.5,1.5) (beta) {$\beta$};

  \draw (.5,0) -- (mu) -- (alpha) -- (.5,2);
  \draw (1.5,0) -- (nu) -- (beta) -- (1.5,2);
  \draw[red] (0,.5) -- (mu) -- (nu) -- (2,.5);
  \draw[red] (0,1.5) -- (alpha) -- (beta) -- (2,1.5);

  \node[rotate=-45] at (2.5,-.5) {$\mapsto$};
  \node[rotate=-135] at (-.5,-.5) {$\mapsto$};
  \node at (1,-3) {$\simeq$};

  \begin{scope}[xshift=2cm,yshift=-5cm]
    \draw[gray] (0,0) rectangle (3,4);
    \node[circle,draw,inner sep=1.5pt] at (.7,2.7) (mu) {$\mu$};
    \node[circle,draw,inner sep=2pt] at (1.5,3.5) (alpha) {$\alpha$};
    \node[circle,draw,inner sep=2pt] at (1.5,0.5) (nu) {$\nu$};
    \node[circle,draw,inner sep=1pt] at (2.3,1.3) (beta) {$\beta$};
    \draw[red] (.25,4) -- (mu) -- (nu) -- (1.6,0);
    \draw[red] (1.4,4) -- (alpha) -- (beta) -- (2.75,0);
    \draw (.25,0) -- (mu) -- (alpha) -- (1.6,4);
    \draw (1.4,0) -- (nu) -- (beta) -- (2.75,4);
  \end{scope}

  \begin{scope}[xshift=-3cm,yshift=-5cm]
    \draw[gray] (0,0) rectangle (3,4);
    \node[circle,draw,inner sep=1.5pt] at (.7,1.3) (mu) {$\mu$};
    \node[circle,draw,inner sep=2pt] at (1.5,3.5) (alpha) {$\alpha$};
    \node[circle,draw,inner sep=2pt] at (1.5,0.5) (nu) {$\nu$};
    \node[circle,draw,inner sep=1pt] at (2.3,2.7) (beta) {$\beta$};
    \draw[red] (.25,4) -- (mu) -- (nu) -- (1.6,0);
    \draw[red] (1.4,4) -- (alpha) -- (beta) -- (2.75,0);
    \draw (.25,0) -- (mu) -- (alpha) -- (1.6,4);
    \draw (1.4,0) -- (nu) -- (beta) -- (2.75,4);
  \end{scope}
\end{tikzpicture}
  \end{center}
\end{proof}

Our goal is to show the converse: if the translations of two expressions in $S_d$ are
equivalent as morphisms in $S_2$, then so are their antecedents in $S_d$. To do so, we need to
construct a reverse translation, from diagrams in the free 2-category
to diagrams in the free double category.

\section{Partial tilings}

To provide an inverse to the translation $t$, let us first introduce a necessary condition
on a diagram in $S_2$ to be in the image of $t$.

\begin{definition}
  A diagram $\phi \in S_2$ is \textbf{admissible} if its domain is of
  the form $v^{\op} ; h$ and its codomain is of the form $h'; v'^{\op}$.
\end{definition}

For all $\psi \in S_d$, $t(\psi)$ is admissible. Conversely, for all admissible $\phi \in S_2$,
we want to construct a corresponding tiling.
To do so, we introduce the notion of partial tiling as
an incomplete diagram in the free double category.

\begin{definition} \label{defi:partial-tiling}
  Let $n \geq 1$, and $h, h_1, \dots, h_n \in H^*$ and $v, v_1, \dots, v_n \in V^*$.
  Assume that $h_i$ is not an identity for $i > 1$ and $v_i$ is not an identity for $i < n$.
  
  A \textbf{partial tiling} of type $h, v \rightarrow h_1, v_1, \dots, h_n, v_n$ is
  a subdivision of the following shape into rectangles:
  \[
\begin{tikzpicture}[scale=0.8, every node/.style={scale=0.8}]
  \draw (0,0) edge node[left] {$v$} (0,5);
  \draw (0,5) edge node[above] {$h$} (5,5);
  \draw (5,5) edge node[right] {$v_n$} (5,4);
  \draw (5,4) edge node[above] {$h_n$} (4,4);
  \draw (4,4) edge node[right] {$v_{n-1}$} (4,3);
  \draw (4,3) edge node[above] {$h_{n-1}$} (3,3);

  \draw (0,0) edge node[above] {$h_1$} (1,0);
  \draw (1,0) edge node[right] {$v_1$} (1,1);
  \draw (1,1) edge node[above] {$h_2$} (2,1);
  \draw (2,1) edge node[right] {$v_2$} (2,2);

  \node at (2.5,2.5) {$\iddots$};
\end{tikzpicture}
  \]
  Each of the rectangles in the subdivision is attributed a generator
  $\alpha \in C$ or a vertical or horizontal identity, such that the
  horizontal and vertical domains and codomains match on each edge.
\end{definition}

We think of a partial tiling as some upper-left corner of a 2-cell in a double category. We will therefore draw partial tilings just like string diagrams for double categories, as in Figure~\ref{fig:example-partial-tilings}.

\begin{definition}
  Two partial tilings are \textbf{equivalent} when they can be related by a series
  of applications of these rules (where $\alpha$ can be an identity itself):

  \begin{center}
\begin{tikzpicture}
  \draw[gray] (0,0) rectangle (2,1);
  \draw[gray] (1,0) -- (1,1);
  \node[circle,draw,inner sep=2pt] at (.5,.5) (alpha) {$\alpha$};
  \draw[red] (0,.5) -- (alpha) -- (2,.5);
  \draw (.5,0) -- (alpha) -- (.5,1);

  \node at (2.5,.5) {$\simeq$};

  \begin{scope}[xshift=3cm]
  \draw[gray] (0,0) rectangle (2,1);
  \node[circle,draw,inner sep=2pt] at (1,.5) (alpha) {$\alpha$};
  \draw[red] (0,.5) -- (alpha) -- (2,.5);
  \draw (1,0) -- (alpha) -- (1,1);
  \end{scope}

  \node at (5.5,.5) {$\simeq$};
  \begin{scope}[xshift=8cm,xscale=-1]
      \draw[gray] (0,0) rectangle (2,1);
  \draw[gray] (1,0) -- (1,1);
  \node[circle,draw,inner sep=2pt] at (.5,.5) (alpha) {$\alpha$};
  \draw[red] (0,.5) -- (alpha) -- (2,.5);
  \draw (.5,0) -- (alpha) -- (.5,1);
  \end{scope}

  \begin{scope}[yshift=-2.5cm]
  \begin{scope}[xshift=2cm,rotate=90]
  \draw[gray] (0,0) rectangle (2,1);
  \draw[gray] (1,0) -- (1,1);
  \node[circle,draw,inner sep=2pt] at (.5,.5) (alpha) {$\alpha$};
  \draw (0,.5) -- (alpha) -- (2,.5);
  \draw[red] (.5,0) -- (alpha) -- (.5,1);
  \end{scope}

  \node at (2.75,1) {$\simeq$};

  \begin{scope}[xshift=4.5cm,rotate=90]
  \draw[gray] (0,0) rectangle (2,1);
  \node[circle,draw,inner sep=2pt] at (1,.5) (alpha) {$\alpha$};
  \draw (0,.5) -- (alpha) -- (2,.5);
  \draw[red] (1,0) -- (alpha) -- (1,1);
  \end{scope}

  \node at (5.25,1) {$\simeq$};
  \begin{scope}[xshift=7cm,yshift=2cm,rotate=90,xscale=-1]
      \draw[gray] (0,0) rectangle (2,1);
  \draw[gray] (1,0) -- (1,1);
  \node[circle,draw,inner sep=2pt] at (.5,.5) (alpha) {$\alpha$};
  \draw (0,.5) -- (alpha) -- (2,.5);
  \draw[red] (.5,0) -- (alpha) -- (.5,1);
  \end{scope}
  \end{scope}

\end{tikzpicture}
  \end{center}
  
  \noindent as well as continuous translations of horizontal
  or vertical boundaries in the subdivision. We denote this equivalence by $\simeq$.
\end{definition}
\noindent For instance, the two partial tilings in Figure~\ref{fig:example-partial-tilings} are equivalent.

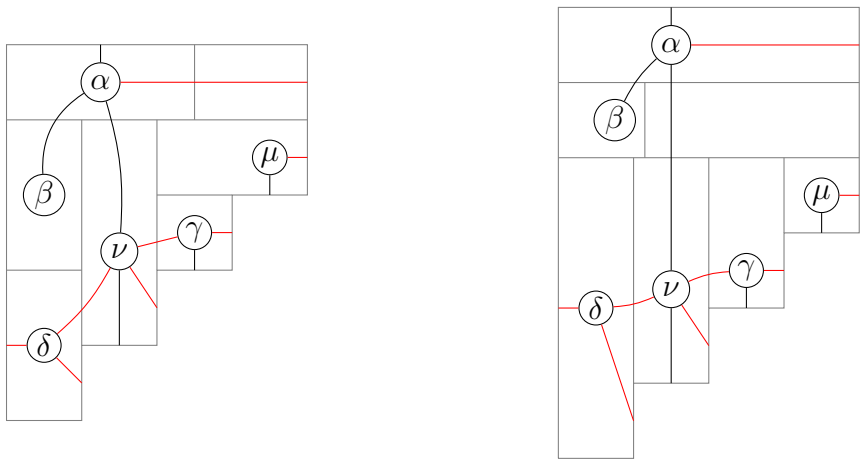
\begin{figure}
  \centering
  \begin{subfigure}{0.45\textwidth}
    \centering
\begin{tikzpicture}
\begin{scope}[every path/.style={gray}]
\draw (0,0) -- (0,5) -- (4,5) -- (4,3) -- (3,3) -- (3,2) -- (2,2) -- (2,1) -- (1,1) -- (1,0) -- (0,0);
\draw (0,4) -- (4,4);
\draw (1,4) -- (1,1);
\draw (0,2) -- (1,2);
\draw (2,2) -- (2,4);
\draw (2,3) -- (3,3);
\draw (2.5,4) -- (2.5,5);
\end{scope}

\node[circle,draw,inner sep=2pt] at (1.25,4.5) (alpha) {$\alpha$};
\node[circle,draw,inner sep=1pt] at (.5,3) (beta) {$\beta$};
\node[circle,draw,inner sep=1pt] at  (.5,1) (delta) {$\delta$};
\node[circle,draw,inner sep=2pt] at (1.5,2.25) (nu) {$\nu$};
\node[circle,draw,inner sep=1pt] at (3.5,3.5) (mu) {$\mu$};
\node[circle,draw,inner sep=1pt] at (2.5,2.5) (gamma) {$\gamma$};

\begin{scope}[every path/.style={red}]
  \draw (alpha) -- (4,4.5);
  \draw (delta) edge[bend right=10] (nu);
  \draw (delta) -- (1,.5);
  \draw (mu) -- (4,3.5);
  \draw (nu) -- (2,1.5);
  \draw (nu) -- (gamma);
  \draw (gamma) -- (3,2.5);
  \draw (0,1) -- (delta);
\end{scope}

\draw (beta) edge[bend left] (alpha);
\draw (alpha) -- ($(alpha)+(0,.5)$);
\draw (alpha) edge[bend left=10] (nu);
\draw (nu) -- (1.5,1);
\draw (gamma) -- (2.5,2);
\draw (mu) -- (3.5,3);

\end{tikzpicture}
  \end{subfigure}
  \begin{subfigure}{0.45\textwidth}
    \centering
\begin{tikzpicture}
\begin{scope}[every path/.style={gray}]
\draw (0,0) -- (0,6) -- (4,6) -- (4,3) -- (3,3) -- (3,2) -- (2,2) -- (2,1) -- (1,1) -- (1,0) -- (0,0);
\draw (0,4) -- (4,4);
\draw (1,4) -- (1,1);
%\draw (0,2) -- (1,2);
\draw (2,2) -- (2,4);
\draw (1.15,4) -- (1.15,5);
\draw (0,5) -- (4,5);
\draw (3,4) -- (3,3);
%\draw (2.5,4) -- (2.5,5);
\end{scope}

\node[circle,draw,inner sep=2pt] at (1.5,5.5) (alpha) {$\alpha$};
\node[circle,draw,inner sep=1pt] at (.75,4.5) (beta) {$\beta$};
\node[circle,draw,inner sep=1pt] at  (.5,2) (delta) {$\delta$};
\node[circle,draw,inner sep=2pt] at (1.5,2.25) (nu) {$\nu$};
\node[circle,draw,inner sep=1pt] at (3.5,3.5) (mu) {$\mu$};
\node[circle,draw,inner sep=1pt] at (2.5,2.5) (gamma) {$\gamma$};

\begin{scope}[every path/.style={red}]
  \draw (alpha) -- (4,5.5);
  \draw (delta) edge[bend right=10] (nu);
  \draw (delta) -- (1,.5);
  \draw (mu) -- (4,3.5);
  \draw (nu) -- (2,1.5);
  \draw (nu) edge[bend left=10] (gamma);
  \draw (gamma) -- (3,2.5);
  \draw (0,2) -- (delta);
\end{scope}

\draw (beta) edge[bend left=10] (alpha);
\draw (alpha) -- ($(alpha)+(0,.5)$);
\draw (alpha) edge (nu);
\draw (nu) -- (1.5,1);
\draw (gamma) -- (2.5,2);
\draw (mu) -- (3.5,3);

\end{tikzpicture}
  \end{subfigure}
  \caption{Examples of partial tilings}
  \label{fig:example-partial-tilings}
\end{figure}

In the special case where $n = 2$, $h_1$ and $v_2$ are identities and $h = h_2$, $v = v_2$, and assuming $\dom h = \dom v$, there is an \textbf{empty partial tiling} of type $h, v \rightarrow 1_{\cod v}, v, h, 1_{\cod h}$:

\[
\begin{tikzpicture}
  \draw[gray] (0,0) -- (0,3) -- (3,3) -- (3,2.5) -- (.5,2.5) -- (.5,0) -- (0,0);
  \draw[gray] (.5,2.5) -- (0,2.5);
  \draw[gray] (.5,2.5) -- (.5,3);
  \draw[red] (0,.75) -- (.5,.75);
  \draw[red] (0,1.75) -- (.5,1.75);
  \draw (1,3) -- (1,2.5);
  \draw (1.66,3) -- (1.66,2.5);
  \draw (2.33,3) -- (2.33,2.5);
\end{tikzpicture}
\]

Another special case are partial tilings of type $h, v \rightarrow h_1, v_1$ which have a rectangular shape. In this case, we can interpret them as 2-cells, but only if they are binary composable.

\begin{lemma} \label{lemma:partial-tiling-complete}
  A partial tiling of type $h, v \rightarrow h_1, v_1$ is \textbf{binary composable}
  if it can be obtained by repeated application of the horizontal and vertical composition
  from generators. In this case, it represents a 2-cell in $S_d$,
  and its meaning is invariant under equivalence of partial tilings.
\end{lemma}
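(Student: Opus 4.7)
The plan is to split the proof into two parts: first, constructing a 2-cell in $S_d$ from a binary composable rectangular partial tiling; and second, verifying that this 2-cell depends only on the equivalence class of the tiling under $\simeq$. For the first part, binary composability provides, by definition, a finite sequence of applications of horizontal and vertical composition that assembles the tiling out of generators and horizontal/vertical identities. Reading this sequence directly yields a 2-cell expression $\phi \in S_d$ whose boundaries are $h, v, h_1, v_1$. Different assembly sequences for the same tiling might produce syntactically different expressions, but the result of \cite{dawson1993general} cited in Section~\ref{sec:double-categories} guarantees that any two parses of a rectangular subdivision as a tree of binary composites are equal in $S_d$, so the resulting 2-cell is well defined.

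For the second part, it suffices to verify invariance under each elementary generator of $\simeq$. The first family of moves absorbs (or splits off) a cell adjacent to another along a shared boundary in such a way that one side becomes an identity; in $S_d$ these moves correspond exactly to horizontal or vertical unitality, $\alpha \star \id = \alpha$ and $\alpha \circ \id = \alpha$. The degenerate case in which the absorbed cell has an identity boundary is handled by the axiom of the free double category that equates horizontal and vertical identities on identity morphisms. The remaining moves are continuous translations of internal horizontal or vertical boundaries; these leave the combinatorial adjacency structure of the subdivision untouched, so any parse on one side is verbatim a parse on the other, and the induced 2-cell is literally the same expression in $S_d$.

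The main obstacle is the bookkeeping needed to check, for each local move, that one can choose compatible parses on both sides differing by a single application of a double category axiom (unitality, associativity, or exchange). Once this compatibility is exhibited for the finitely many types of moves, Dawson's theorem absorbs all further choice of parse automatically, so the verification reduces to a small and essentially mechanical case analysis on the shape of the move and the position of the affected cells.
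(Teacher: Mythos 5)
Your proof takes essentially the same route as the paper's: it invokes the general associativity result of \cite{dawson1993general} to get a well-defined 2-cell independent of the chosen parse, and then identifies the elementary equivalence moves on partial tilings with unitality of identities in $S_d$. Your write-up is considerably more detailed than the paper's two-sentence argument, but the substance is identical.
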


\begin{proof}
  If a diagram is binary composable then by the general associativity result
  of \cite{dawson1993general}, it can be interpreted as a 2-cell in $S_d$ which
  does not depend on the order of composition chosen.
  Then, equivalences of partial tilings correspond to unitality of identities
  when interpreted in a binary composable diagram, so the 2-cell is invariant
  under these equivalences.
\end{proof}

\begin{definition}
 Let $h$ be an horizontal morphism in $S_d$. It can be uniquely decomposed as
a composition of generators $h = h_1 \circ \dots \circ h_k$. We define the \emph{length} of $h$ as $|h| = k$. Let $0 \leq i < k$ and $1 \leq j < k$. We say that $h' = h_{i+1} \circ \dots \circ h_j$ is a \emph{factor at index} $i$ of $h$.
Similar notions are defined for vertical morphisms.
\end{definition}

\noindent For instance, the factors at index $0$ of a morphism $h$ are its prefixes.

\begin{definition}
  Let $m$ be a partial tiling of type $h, v \rightarrow h_1, v_1, \dots, h_n, v_n$
  and let $\alpha : h', v' \rightarrow h'', v''$ be a generator. A \textbf{gluing position}
  of $\alpha$ on $m$ is one of the following:
  \begin{itemize}
  \item if $h'$ is a prefix of $h_1$, then $(0,0,0)$ is a gluing position;
  \item if $h'$ is a factor of $h_1$ at index $i > 0$ and $v'$ is an identity, then $(0,i,0)$ is a gluing position;
  \item if $v'$ is a prefix of $v_n$, then $(n,0,0)$ is a gluing position;
  \item if $v'$ is a factor of $v_n$ at index $i > 0$ and $h'$ is an identity, then $(n,0,i)$ is a gluing position;
  \end{itemize}
  Furthermore, for all $1 \leq k < n$:
  \begin{itemize}
     \item if $v'$ is a prefix of $v_k$ and $h'$ is a prefix of $h_{k+1}$,        then $(k,0,0)$ is a gluing position;
     \item if $v'$ is a factor of $v_k$ at index $i$ and $h'$ is an identity, then $(k,0,i)$ is a gluing position;
     \item if $h'$ is a factor of $h_{k+1}$ at index $i$ and $v'$ is an identity, then $(k,i,0)$ is a gluing position.     
  \end{itemize}
  For each gluing position $p$ we define the \textbf{gluing} of $\alpha$ to $m$, denoted by $m \star_p \alpha$, by the partial tiling obtained by adjoining $\alpha$ at the designated position,
  and adding any necessary identity to satisfy the condition of Definition~\ref{defi:partial-tiling}.
\end{definition}

Figure~\ref{fig:gluing-positions} shows all the possible gluing positions. Figure~\ref{fig:gluing} shows how a generator can be glued at
multiple positions on a partial tiling and how identities can be used
to ensure that the resulting arrangement has non-identity inner
boundaries.

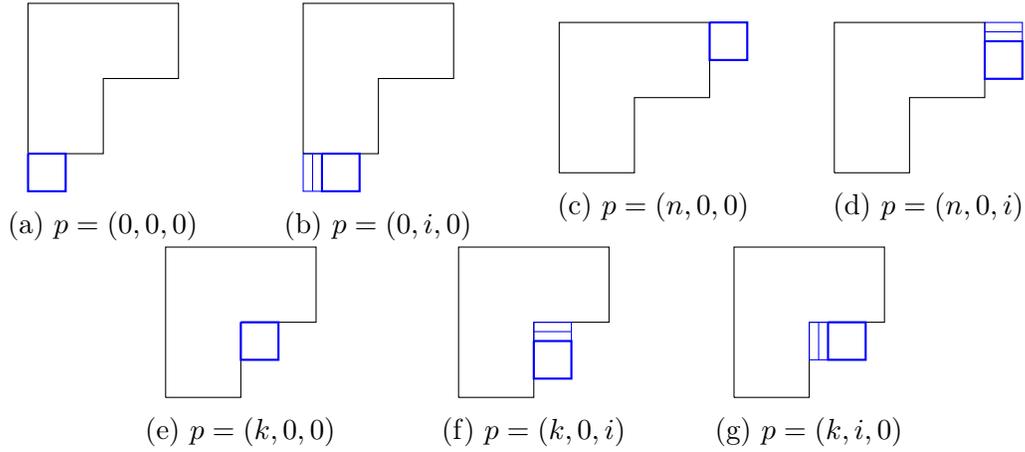
\begin{figure}
  \centering
\begin{subfigure}{0.22\textwidth}
  \centering
  \begin{tikzpicture}
    \draw (0,0) -- (1,0) -- (1,1) -- (2,1) -- (2,2) -- (0,2) -- (0,0);
    \draw[blue,thick] (0,0) -- (.5,0) -- (.5,-.5) -- (0,-.5) -- (0,0);
  \end{tikzpicture}
  \caption{$p = (0,0,0)$}
\end{subfigure}
\begin{subfigure}{0.22\textwidth}
  \centering
  \begin{tikzpicture}
    \draw (0,0) -- (1,0) -- (1,1) -- (2,1) -- (2,2) -- (0,2) -- (0,0);
    \draw[blue] (0,0) -- (.25,0) -- (.25,-.5) -- (0,-.5) -- (0,0);
    \draw[blue] (.125,0) -- (.125,-.5);
    \begin{scope}[xshift=.25cm]
      \draw[blue,thick] (0,0) -- (.5,0) -- (.5,-.5) -- (0,-.5) -- (0,0);
    \end{scope}
  \end{tikzpicture}
  \caption{$p = (0,i,0)$}
\end{subfigure}
\begin{subfigure}{0.22\textwidth}
  \centering
  \begin{tikzpicture}
    \draw (0,0) -- (1,0) -- (1,1) -- (2,1) -- (2,2) -- (0,2) -- (0,0);
    \begin{scope}[xshift=2cm,yshift=2cm]
      \draw[blue,thick] (0,0) -- (.5,0) -- (.5,-.5) -- (0,-.5) -- (0,0);
    \end{scope}
  \end{tikzpicture}
  \caption{$p = (n,0,0)$}
\end{subfigure}
\begin{subfigure}{0.22\textwidth}
  \centering
  \begin{tikzpicture}
    \draw (0,0) -- (1,0) -- (1,1) -- (2,1) -- (2,2) -- (0,2) -- (0,0);
    \draw[blue] (2,2) -- (2,1.75) -- (2.5,1.75) -- (2.5,2) -- (2,2);
    \draw[blue] (2,1.875) -- (2.5,1.875);
    \begin{scope}[yshift=1.75cm,xshift=2cm]
      \draw[blue,thick] (0,0) -- (.5,0) -- (.5,-.5) -- (0,-.5) -- (0,0);
    \end{scope}
  \end{tikzpicture}
  \caption{$p = (n,0,i)$}
\end{subfigure}
\begin{subfigure}{0.25\textwidth}
  \centering
  \begin{tikzpicture}
    \draw (0,0) -- (1,0) -- (1,1) -- (2,1) -- (2,2) -- (0,2) -- (0,0);
    \begin{scope}[xshift=1cm,yshift=1cm]
      \draw[blue,thick] (0,0) -- (.5,0) -- (.5,-.5) -- (0,-.5) -- (0,0);
    \end{scope}
  \end{tikzpicture}
  \caption{$p = (k,0,0)$}
\end{subfigure}
\begin{subfigure}{0.22\textwidth}
  \centering
  \begin{tikzpicture}
    \draw (0,0) -- (1,0) -- (1,1) -- (2,1) -- (2,2) -- (0,2) -- (0,0);
    \begin{scope}[xshift=-1cm,yshift=-1cm]
    \draw[blue] (2,2) -- (2,1.75) -- (2.5,1.75) -- (2.5,2) -- (2,2);
    \draw[blue] (2,1.875) -- (2.5,1.875);
    \begin{scope}[yshift=1.75cm,xshift=2cm]
      \draw[blue,thick] (0,0) -- (.5,0) -- (.5,-.5) -- (0,-.5) -- (0,0);
    \end{scope}
    \end{scope}
  \end{tikzpicture}
  \caption{$p = (k,0,i)$}
\end{subfigure}
\begin{subfigure}{0.22\textwidth}
  \centering
  \begin{tikzpicture}
    \draw (0,0) -- (1,0) -- (1,1) -- (2,1) -- (2,2) -- (0,2) -- (0,0);
    \begin{scope}[xshift=1cm,yshift=1cm]
      \draw[blue] (0,0) -- (.25,0) -- (.25,-.5) -- (0,-.5) -- (0,0);
      \draw[blue] (.125,0) -- (.125,-.5);
      \begin{scope}[xshift=.25cm]
        \draw[blue,thick] (0,0) -- (.5,0) -- (.5,-.5) -- (0,-.5) -- (0,0);
      \end{scope}
    \end{scope}
  \end{tikzpicture}
  \caption{$p = (k,i,0)$}
\end{subfigure}
  \caption{Possible gluing positions. When the second or third component of the position is not null, an identity cell is added.}
  \label{fig:gluing-positions}
\end{figure}
\begin{figure}
  \centering
\begin{tikzpicture}[every node/.style={scale=.8}]
  \node at (-.5,1) {$m =$};
  
  \draw[gray] (0,0) -- (0,2) -- (2,2) -- (2,1) -- (1,1) -- (1,0) -- (0,0);
  \draw[gray] (0,1) -- (1,1);

  \node[circle,draw,inner sep=1pt] at (.5,.5) (beta) {$\beta$};
  \node[circle,draw,inner sep=1.5pt] at (1,1.5) (delta) {$\delta$};

  \draw[red] (beta) edge[bend right=10] (1,.33);
  \draw[red] (beta) edge[bend left=10] (1,.66);
  \draw[red] (delta) edge (2,1.5);
  \draw[red] (0,1.5) edge (delta);
  \draw (delta) edge[bend right=10] (beta);
  \draw (delta) edge[bend left=10] (1.33,1);
  \draw (delta) edge[bend left=10] (1.66,1);

  \begin{scope}[xshift=5cm,yshift=.5cm]
    \node at (-.5,.5) {$\alpha =$};
    \draw[gray] (0,0) -- (0,1) -- (1,1) -- (1,0) -- (0,0);
    \node[circle,draw,inner sep=1pt] at (.5,.5) (alpha) {$\alpha$};
    \draw[red] (0,.5) -- (alpha);
    \draw (.5,1) -- (alpha);
  \end{scope}

  \begin{scope}[xshift=0cm,yshift=-3cm]
      \node at (-.9,1) {$m \star_{(1,0,0)} \alpha =$};
  
  \draw[gray] (0,0) -- (0,2) -- (2,2) -- (2,1) -- (1,1) -- (1,0) -- (0,0);
  \draw[gray] (0,1) -- (1,1);
  \draw[gray] (1,.3) -- (1.7,.3) -- (1.7,1);
  \draw[gray] (1,0) -- (1.7,0) -- (1.7,0.3);
  \draw[gray] (2,1) -- (2,.3) -- (1.7,.3);

  \node[circle,draw,inner sep=1pt] at (.5,.5) (beta) {$\beta$};
  \node[circle,draw,inner sep=1pt] at (1.4,.6) (alpha) {$\alpha$};
  \node[circle,draw,inner sep=1.5pt] at (1,1.5) (delta) {$\delta$};

  \draw[red] (beta) edge[bend right=10] (1,.15);
  \draw[red] (1,.15) -- (1.7,.15);
  \draw[red] (beta) edge[bend left=10] (alpha);
  \draw[red] (delta) edge (2,1.5);
  \draw[red] (0,1.5) edge (delta);
  \draw (delta) edge[bend right=10] (beta);
  \draw (delta) edge[bend left=10] (alpha);
  \draw (delta) edge[bend left=10] (1.85,1);
  \draw (1.85,1) -- (1.85,0.3);
  \end{scope}

  \begin{scope}[xshift=5cm,yshift=-3cm]
  \node at (-1,1) {$m \star_{(2,0,0)} \alpha =$};
  
  \draw[gray] (0,0) -- (0,2) -- (2,2) -- (2,1) -- (1,1) -- (1,0) -- (0,0);
  \draw[gray] (0,1) -- (1,1);
  \draw[gray] (2,1) -- (3,1) -- (3,2) -- (2,2);

  \node[circle,draw,inner sep=1pt] at (.5,.5) (beta) {$\beta$};
  \node[circle,draw,inner sep=1.5pt] at (1,1.5) (delta) {$\delta$};

  \draw[red] (beta) edge[bend right=10] (1,.33);
  \draw[red] (beta) edge[bend left=10] (1,.66);
  \draw[red] (delta) edge (2,1.5);
  \draw[red] (0,1.5) edge (delta);
  \draw (delta) edge[bend right=10] (beta);
  \draw (delta) edge[bend left=10] (1.33,1);
  \draw (delta) edge[bend left=10] (1.66,1);

  \begin{scope}[xshift=2cm,yshift=1cm]
      \node[circle,draw,inner sep=1pt] at (.5,.5) (alpha) {$\alpha$};
    \draw[red] (0,.5) -- (alpha);
    \draw (.5,1) -- (alpha);
  \end{scope}
  \end{scope}
\end{tikzpicture}
  \caption{Example of gluings of a generator on a partial tiling}
  \label{fig:gluing}
\end{figure}
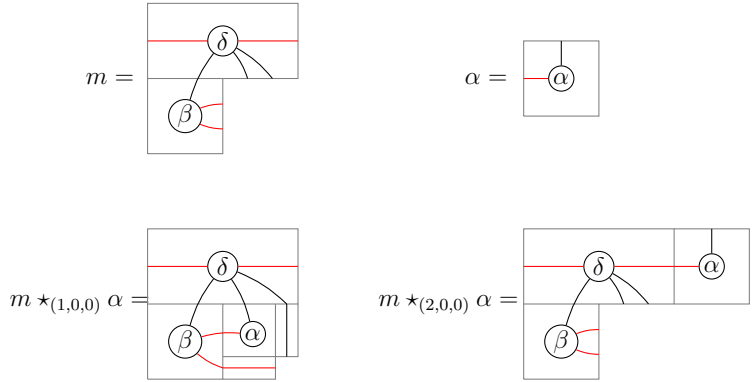

\begin{definition}
  Let $\phi$ be a diagram in the free 2-category $S_2$.
  We assume that the domain of $\phi$ is of the form $v^{\op} ; h$ with $v$ and $h$ paths of generators
  from $S$.
  
  Let $l \in \mathbb{N}$ be a level in $\phi$ and $h_1; v_1^\op ; \dots ; h_n; v_n^\op$ be the
  type of the diagram at this height, where $v_1$ and $h_n$ can possibly be identities unlike the others
  elements of the sequence.

  We associate to this data a partial tiling $p_k(\phi) : h, v \rightarrow h_1, v_1, \dots, h_n, v_n$, by induction
  on $k$.
  If $k = 0$, $p_k(\phi)$ is the empty partial tiling of type
  $v, h \rightarrow 1, v, h, 1$.  Otherwise, let $\alpha$ be the
  generator between levels $k-1$ and $k$. We define $p_k(\phi)$ as the
  gluing of $\alpha$ on $p_{k-1}(\phi)$ at the position indicated by
  the connection of $\alpha$ to the level $k-1$ of $\phi$.

  Finally we define $p(\phi)$ as $p_f(\phi)$ for $f$ the final level
  of $\phi$.
\end{definition}

\noindent The construction relies on the following two lemmas:
\begin{lemma}
  Let $\alpha$ be the generator between slices $k$ and $k+1$ in $\phi$, a diagram in $S_2$.
  If $\alpha$ has at least one input wire, this determines a unique gluing position $g$
  of $\alpha$ on $p_k(\phi)$.
\end{lemma}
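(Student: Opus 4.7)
The plan is to read off the gluing position by locating $\alpha$'s source wires within the slice at level $k$ of $\phi$ and then performing a case analysis on which parts of the source are identities.

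I would first note that, by induction on $k$, the type of the staircase of $p_k(\phi)$, namely $h_1, v_1, \ldots, h_n, v_n$, agrees with the type of slice $k$ of $\phi$, so there is a canonical bijection between the wires of the slice and the segments of the staircase. Because $\alpha$ sits at a fixed location in the string diagram $\phi$, its source $h' \circ v'^\op$ (with $h' = \domh \alpha$ and $v' = \domv \alpha$) attaches to a unique, contiguous substring of this slice.

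Next I would split into three cases according to which of $h', v'$ are identities; by hypothesis at least one is non-identity. If only $h'$ is non-identity, the attachment is a purely horizontal sub-path lying within some segment $h_j$, and depending on whether it is a prefix of $h_1$, an interior factor of $h_1$, or a factor of some $h_{k+1}$, this yields respectively the gluing positions $(0,0,0)$, $(0,i,0)$, or $(k,i,0)$. If only $v'$ is non-identity, the symmetric analysis yields a position of the form $(n,0,0)$, $(n,0,i)$, or $(k,0,i)$. If both are non-identity, the attachment spans a transition from a vertical-op wire $v_k^\op$ to the adjacent horizontal wire $h_{k+1}$ in the slice, which corresponds to an inner corner of the staircase, giving the position $(k,0,0)$ with $v'$ a prefix of $v_k$ and $h'$ a prefix of $h_{k+1}$. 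Uniqueness is immediate in every case, since $\alpha$'s attachment substring in slice $k$ is fixed by the structure of $\phi$ and the reading above sends it to exactly one of the gluing positions.

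The step requiring the most care is the ``both non-identity'' case: one must verify that the source of $\alpha$, whose vertical-op and horizontal parts are arranged in the order dictated by the form $h' \circ v'^\op$ of the translation, can only span a $v_k^\op$-to-$h_{k+1}$ boundary in the slice, corresponding to an inner corner of the staircase, and never an $h_j$-to-$v_j^\op$ boundary, which would be a convex corner where no gluing position is defined. This alignment is forced by the specific order of the factors in the source prescribed by the translation $t$.
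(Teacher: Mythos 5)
Your proposal is correct and follows essentially the same route as the paper: both establish the wire-by-wire correspondence between slice $k$ of $\phi$ and the open boundary of $p_k(\phi)$, then case-split on which of $h', v'$ is an identity to read off the unique gluing position. Your explicit treatment of the both-non-identity case (checking that the order of factors in the domain forces the attachment onto an inner corner) spells out a point the paper leaves implicit, but the argument is the same.
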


\begin{proof}
  Each wire crossing level $k$ in $\phi$ corresponds to an open wire on the boundary of $p_k(\phi)$, either in a vertical or horizontal boundary depending on the colour of the wire.

  Let $v^\op ; h$ be the domain of $\alpha$.

  By assumption, at least one of $v, h$ is not an identity. Assume first that $v$ is not an identity.
  As no horizontal inner boundaries of $p_k(\phi)$ are identities, as required by Definition~\ref{defi:partial-tiling}, any contiguous sequence of red wires in $\phi$ corresponds to a contiguous sequence of wires on some vertical boundary $v_i$ of $p_k(\phi)$.

  Let $j$
  be such that $v$ is a factor at index $j$ in $v_i$.
  One can then check that $(i,0,j)$ is a valid
  gluing position for $\alpha$ on $p_k(\phi)$.

  Similarly, if $h$ is not an identity, then the corresponding wires in $\phi$ determine a unique occurrence of $h$ in a vertical boundary $h_i$ of $p_k(\phi)$,
  and by denoting by $j$ the index of $h$ in $h_i$,
  this determines the gluing position $(i-1,j,0)$.
\end{proof}

\begin{lemma} \label{lemma:gluing-no-input}
  Let again $\alpha$ be the generator between slices $h$ and $h+1$ in $\phi$, a diagram in $S_2$.
  If $\alpha$ has no input wire, meaning that its domain is the identity, then this determines either one or
  two gluing positions of $\alpha$ on $p_h(\phi)$. If there are two such positions $l, l'$ then $p_h(\phi) \star_{l} \alpha \simeq p_h(\phi) \star_{l'} \alpha$.
\end{lemma}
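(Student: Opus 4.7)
The plan is to classify, by case analysis, the gluing positions of $\alpha$ that are consistent with its placement in slice $h$ of $\phi$, and then to show that when two positions arise they yield equivalent partial tilings. Since $\alpha$ has no input wires, both $\domh \alpha$ and $\domv \alpha$ are identities in $S_d$, and hence $\alpha$ occupies a single horizontal slot in the slice, between two consecutive wires or at one of its ends. The wires of the slice correspond bijectively to the wires on the output staircase of $p_h(\phi)$ in traversal order $h_1, v_1, \ldots, h_n, v_n$ (with black $v_k^{\op}$-wires appearing in reverse order), so the position of $\alpha$ in the slice identifies the location on the staircase at which $\alpha$ is to be glued.

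I would then enumerate cases. If the two wires flanking $\alpha$ lie within a single boundary segment, the consistent gluing position is uniquely of the form $(j-1, i, 0)$ or $(j, 0, i)$, according to whether the segment is $h_j$ or $v_j$. At the leftmost or rightmost end of the slice, the unique positions are $(0, 0, 0)$ and $(n, 0, 0)$. If the two flanking wires straddle two consecutive segments, then a black-to-red transition in the slice, between $v_k^{\op}$ and $h_{k+1}$, corresponds to a concave corner of the staircase and yields the unique position $(k, 0, 0)$ with no identity filler. A red-to-black transition, between $h_k$ and $v_k^{\op}$, corresponds to a convex corner of the staircase, and here there are exactly two consistent positions: one placing $\alpha$ past the tail of $h_k$ with an identity filler along $h_k$, and one placing $\alpha$ past the tail of $v_k$ with an identity filler along $v_k$.

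The remaining step, and the main obstacle, is to verify the equivalence of the two tilings at a convex corner. I plan to invoke the three-way equivalence rule accompanying the definition of $\simeq$, which identifies $\alpha$ sandwiched with an identity cell on its left, $\alpha$ alone spanning the full span, and $\alpha$ sandwiched with an identity cell on its right, in order to reduce both tilings to a common form in which $\alpha$ sits at the convex corner with no identity filler attached. Since $\alpha$ has empty domain, no wires are present on the side of the filler, so the attached identity cell has exactly the shape that this rule is designed to absorb. A final continuous translation of the newly internal boundary between $\alpha$ and the rest of the tiling aligns the two resulting tilings precisely, closing the equivalence.
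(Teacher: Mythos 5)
Your overall strategy is the same as the paper's: identify the slot occupied by $\alpha$ in slice $h$ with a location on the staircase boundary of $p_h(\phi)$, observe that a unique gluing position arises inside a boundary segment and at a concave ($v_k^{\op}$-to-$h_{k+1}$) corner, that two positions arise at a convex ($h_k$-to-$v_k^{\op}$) corner, and resolve the ambiguous case with the identity-absorption rules in the definition of $\simeq$ together with a translation of the new internal boundary. That last step is exactly what the paper's Figure~\ref{fig:equiv-gluing-positions} depicts, so your treatment of the two-position case is sound and matches the intended argument.

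There is, however, one concrete slip: you assert that the leftmost and rightmost ends of the slice always give the \emph{unique} positions $(0,0,0)$ and $(n,0,0)$. This is only true when the outermost boundary segments $h_1$ and $v_n$ are non-identities. Definition~\ref{defi:partial-tiling} explicitly permits $h_1$ and $v_n$ to be identities (and $p_0(\phi)$, of type $1, v, h, 1$, is already of this form), and in that situation the end of the slice is a degenerate convex corner: for instance with $h_1 = 1$ a leftmost $\alpha$ can be glued either below the bottom-left corner or at the head of $v_1$, and likewise at the other end, or when the slice is empty altogether. The paper's proof lists precisely these configurations --- no wire on the left with a vertical wire on the right, a horizontal wire on the left with none on the right, and no wires on either side --- among the two-position cases, and they are part of what the ``or two'' in the statement refers to. The omission is not fatal, since your convex-corner equivalence argument applies verbatim to these degenerate corners, but as written your case analysis would wrongly conclude uniqueness there; you should fold the identity-boundary ends into the convex-corner case rather than into the unique-position one.
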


\begin{proof}
  Let $h_1 ; v_1^\op ; \dots ; h_n ; v_n^\op$ be the type of the diagram at height $h$.
  Again, each wire in this sequence corresponds to an open wire on the boundary of $p_k(\phi)$.
  The wires passing to the left of $\alpha$ in $\phi$ determine a position in this sequence where
  the generator $\alpha$ is inserted. The gluing positions depend on this position.

  If $\alpha$ is bordered by two horizontal wires on each side (respectively two vertical wires),
  this determines a unique gluing position $(k, i, 0)$ (respectively $(k, 0, i)$) as in the previous lemma. Similarly,
  if $\alpha$ neighbours a vertical wire on its left and a horizontal wire on its right, this
  determines a unique gluing position $(k,0,0)$ as in the previous lemma.

  The remaining cases are when $\alpha$ neighbours a horizontal wire
  on its left and a vertical wire on its right, when $\alpha$ does not
  have any wire on its left and a vertical one on its right, when it
  has a horizontal wire on its left and none on its right, or when
  there are no wires neither on the left or the right of $\alpha$.  In
  this case this determines two gluing positions $l = (k,i,0)$ and $l'
  = (k+1,0,j)$, and Figure~\ref{fig:equiv-gluing-positions} shows how
  $p_h(\phi) \star_l \alpha \simeq p_h(\phi) \star_{l'} \alpha$ in
  this case.
\end{proof}

\begin{figure}
\begin{tikzpicture}
  \draw[gray] (0,0) -- (2,0) -- (2,2) -- (0,2) -- (0,0);
  \draw[gray] (0,1) -- (2,1);
  \draw[gray] (1,0) -- (1,1);

  \node[circle,draw,inner sep=1pt] at (.5,1.5) (beta) {$\beta$};
  \node[circle,draw,inner sep=2pt] at (1.5,.5) (alpha) {$\alpha$};
  \draw[red] (beta) -- (2,1.5);
  \draw (beta) -- (.5,0);
  \draw[red] (alpha) -- (2,.5);
  \draw (alpha) -- (1.5,0);

  \node at (3,1) {$\simeq$};

  \begin{scope}[xshift=4cm]
    \draw[gray] (0,0) -- (2,0) -- (2,2) -- (0,2) -- (0,0);
  \draw[gray] (0,1) -- (2,1);
  \draw[gray] (1,0) -- (1,2);

  \node[circle,draw,inner sep=1pt] at (.5,1.5) (beta) {$\beta$};
  \node[circle,draw,inner sep=2pt] at (1.5,.5) (alpha) {$\alpha$};
  \draw[red] (beta) -- (2,1.5);
  \draw (beta) -- (.5,0);
  \draw[red] (alpha) -- (2,.5);
  \draw (alpha) -- (1.5,0);

  \end{scope}

  \node at (7,1) {$\simeq$};

  \begin{scope}[xshift=8cm]
    \draw[gray] (0,0) -- (2,0) -- (2,2) -- (0,2) -- (0,0);
  \draw[gray] (1,1) -- (2,1);
  \draw[gray] (1,0) -- (1,2);

  \node[circle,draw,inner sep=1pt] at (.5,1.5) (beta) {$\beta$};
  \node[circle,draw,inner sep=2pt] at (1.5,.5) (alpha) {$\alpha$};
  \draw[red] (beta) -- (2,1.5);
  \draw (beta) -- (.5,0);
  \draw[red] (alpha) -- (2,.5);
  \draw (alpha) -- (1.5,0);
  \end{scope}

  \node at (-1.3,1) {$\beta \star_{(0,1,0)} \alpha =$};
  \node at (11.3,1) {$= \beta \star_{(1,0,1)} \alpha$};
\end{tikzpicture}

  \caption{Two equivalent gluing positions in Lemma~\ref{lemma:gluing-no-input}}
  \label{fig:equiv-gluing-positions}
\end{figure}
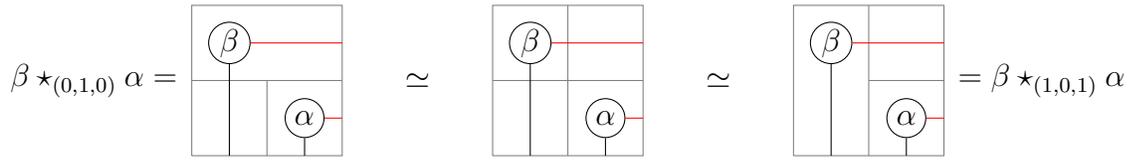

\begin{figure}
\begin{subfigure}{0.4\textwidth}
\begin{tikzpicture}[scale=0.8]
    \storeedgecolor{1}{red}
    \startdiagram{5}

    \foreach \x in {0,...,4} {
      \node at ($(-2.5,0-\x)$) (desc) {$h = \x$};
      \draw[dashed,gray] (desc) -- ($(desc)+(4.5,0)$);
    }

  \drawinitialstrands{5}

  \diagslice{1}{2}{2}

  \diagslice{2}{2}{1}

  \diagslice{0}{2}{1}

  \diagslice{1}{1}{0}

  \storeedgecolor{0}{red}
  \storeedgecolor{1}{red}
  \storeedgecolor{6}{red}
  \storeedgecolor{7}{red}
  \finishdiagram
  \begin{scope}[every node/.style={circle,fill=white,draw=black,inner sep=1.5pt}]
    \node at (v0) {$\alpha$};
    \node at (v1) {$\beta$};
    \node at (v2) {$\gamma$};
    \node at (v3) {$\delta$};
  \end{scope}
\end{tikzpicture}
\end{subfigure}
\begin{subfigure}{0.4\textwidth}
\begin{tikzpicture}
  \node at (-1,1.5) {$p_0(\phi) = $};

  \draw[gray] (-.25,0) -- (-.25,3.25) -- (3,3.25) -- (3,3) -- (0,3) -- (0,0) -- (-.25,0);
  \draw[gray] (-.25,3) -- (0,3);
  \draw[gray] (0,3) -- (0,3.25);
  \draw[red] (0,.5) -- (-.25,.5);
  \draw[red] (0,2.5) -- (-.25,2.5);
  \draw (.5,3) -- (.5,3.25);
  \draw (2.5,3) -- (2.5,3.25);
\end{tikzpicture}
\end{subfigure}
\begin{subfigure}{0.4\textwidth}
\begin{tikzpicture}
  \node at (-1,1.5) {$p_1(\phi) = $};

  \draw[gray] (-.25,0) -- (-.25,3.25) -- (3,3.25) -- (3,3) -- (0,3) -- (0,0) -- (-.25,0);
  \draw[gray] (-.25,3) -- (0,3);
  \draw[gray] (0,3) -- (0,3.25);
  \draw[red] (0,.5) -- (-.25,.5);
  \draw[red] (0,2.5) -- (-.25,2.5);
  \draw (.5,3) -- (.5,3.25);
  \draw (2.5,3) -- (2.5,3.25);

  % 1
  \draw[gray] (0,2) -- (2,2) -- (2,3);
  \draw[gray] (2,2) -- (3,2) -- (3,3);
  \node[circle,draw,inner sep=2pt,scale=.8] at (.75,2.5) (alpha) {$\alpha$};
  \draw[red] (0,2.5) -- (alpha);
  \draw (.5,3) edge[bend right=10] (alpha);
  \draw (alpha) edge[bend right=10] (.5,2);
  \draw (alpha) edge[bend left=10] (1.5,2);

  \draw (2.5,3) -- (2.5,2);

\end{tikzpicture}
\end{subfigure}
\begin{subfigure}{0.4\textwidth}
\begin{tikzpicture}
  \node at (-1,1.5) {$p_2(\phi) = $};

  \draw[gray] (-.25,0) -- (-.25,3.25) -- (3,3.25) -- (3,3) -- (0,3) -- (0,0) -- (-.25,0);
  \draw[gray] (-.25,3) -- (0,3);
  \draw[gray] (0,3) -- (0,3.25);
  \draw[red] (0,.5) -- (-.25,.5);
  \draw[red] (0,2.5) -- (-.25,2.5);
  \draw (.5,3) -- (.5,3.25);
  \draw (2.5,3) -- (2.5,3.25);

  % 1
  \draw[gray] (0,2) -- (2,2) -- (2,3);
  \draw[gray] (2,2) -- (3,2) -- (3,3);
  \node[circle,draw,inner sep=2pt,scale=.8] at (.75,2.5) (alpha) {$\alpha$};
  \draw[red] (0,2.5) -- (alpha);
  \draw (.5,3) edge[bend right=10] (alpha);
  \draw (alpha) edge[bend right=10] (.5,2);
  \draw (alpha) edge[bend left=10] (1.5,2);

  \draw (2.5,3) -- (2.5,2);

  % 2
  \draw[gray] (1,2) -- (1,1) -- (0,1);
  \draw[gray] (3,2) -- (3,1) -- (1,1);
  \node[circle,draw,inner sep=1pt,scale=.8] at (2,1.4) (beta) {$\beta$};
  \draw (.5,2) -- (.5,1);
  \draw (1.5,2) edge[bend right=10] (beta);
  \draw (2.5,2) edge[bend left=10] (beta);
  \draw[red] (beta) -- (3,1.4);
\end{tikzpicture}
\end{subfigure}
\begin{subfigure}{0.4\textwidth}
\begin{tikzpicture}
  \node at (-1,1.5) {$p_3(\phi) = $};

  \draw[gray] (-.25,0) -- (-.25,3.25) -- (3,3.25) -- (3,3) -- (0,3) -- (0,0) -- (-.25,0);
  \draw[gray] (-.25,3) -- (0,3);
  \draw[gray] (0,3) -- (0,3.25);
  \draw[red] (0,.5) -- (-.25,.5);
  \draw[red] (0,2.5) -- (-.25,2.5);
  \draw (.5,3) -- (.5,3.25);
  \draw (2.5,3) -- (2.5,3.25);

  % 1
  \draw[gray] (0,2) -- (2,2) -- (2,3);
  \draw[gray] (2,2) -- (3,2) -- (3,3);
  \node[circle,draw,inner sep=2pt,scale=.8] at (.75,2.5) (alpha) {$\alpha$};
  \draw[red] (0,2.5) -- (alpha);
  \draw (.5,3) edge[bend right=10] (alpha);
  \draw (alpha) edge[bend right=10] (.5,2);
  \draw (alpha) edge[bend left=10] (1.5,2);

  \draw (2.5,3) -- (2.5,2);

  % 2
  \draw[gray] (1,2) -- (1,1) -- (0,1);
  \draw[gray] (3,2) -- (3,1) -- (1,1);
  \node[circle,draw,inner sep=1pt,scale=.8] at (2,1.4) (beta) {$\beta$};
  \draw (.5,2) -- (.5,1);
  \draw (1.5,2) edge[bend right=10] (beta);
  \draw (2.5,2) edge[bend left=10] (beta);
  \draw[red] (beta) -- (3,1.4);

  % 3
  \draw[gray] (0,0) -- (3,0) -- (3,1);
  \node[circle,draw,inner sep=1pt,scale=.8] at (1,.5) (gamma) {$\gamma$};
  \draw (.5,1) edge[bend right=10] (gamma);
  \draw[red] (0,.5) -- (gamma);
  \draw[red] (gamma) -- (3,.5);
\end{tikzpicture}
\end{subfigure}
\begin{subfigure}{0.4\textwidth}
\begin{tikzpicture}
  \node at (-1,1.5) {$p_4(\phi) = $};

  \draw[gray] (-.25,0) -- (-.25,3.25) -- (3,3.25) -- (3,3) -- (0,3) -- (0,0) -- (-.25,0);
  \draw[gray] (-.25,3) -- (0,3);
  \draw[gray] (0,3) -- (0,3.25);
  \draw[red] (0,.5) -- (-.25,.5);
  \draw[red] (0,2.5) -- (-.25,2.5);
  \draw (.5,3) -- (.5,3.25);
  \draw (2.5,3) -- (2.5,3.25);

  % 1
  \draw[gray] (0,2) -- (2,2) -- (2,3);
  \draw[gray] (2,2) -- (3,2) -- (3,3);
  \node[circle,draw,inner sep=2pt,scale=.8] at (.75,2.5) (alpha) {$\alpha$};
  \draw[red] (0,2.5) -- (alpha);
  \draw (.5,3) edge[bend right=10] (alpha);
  \draw (alpha) edge[bend right=10] (.5,2);
  \draw (alpha) edge[bend left=10] (1.5,2);

  \draw (2.5,3) -- (2.5,2);

  % 2
  \draw[gray] (1,2) -- (1,1) -- (0,1);
  \draw[gray] (3,2) -- (3,1) -- (1,1);
  \node[circle,draw,inner sep=1pt,scale=.8] at (2,1.4) (beta) {$\beta$};
  \draw (.5,2) -- (.5,1);
  \draw (1.5,2) edge[bend right=10] (beta);
  \draw (2.5,2) edge[bend left=10] (beta);
  \draw[red] (beta) -- (3,1.4);

  % 3
  \draw[gray] (0,0) -- (3,0) -- (3,1);
  \node[circle,draw,inner sep=1pt,scale=.8] at (1,.5) (gamma) {$\gamma$};
  \draw (.5,1) edge[bend right=10] (gamma);
  \draw[red] (0,.5) -- (gamma);
  \draw[red] (gamma) -- (3,.5);

  % 4
  \draw[gray] (3,0) -- (4,0) -- (4,1) -- (3,1);
  \draw[gray] (4,1) -- (4,3.25) -- (3,3.25);
  \node[circle,draw,inner sep=1pt,scale=.8] at (3.5,1.4) (delta) {$\delta$};
  \draw[red] (3,.5) -- (4,.5);
  \draw[red] (3,1.4) -- (delta);
\end{tikzpicture}
\end{subfigure}

  \caption{Inductive construction of $p(\phi)$}
\end{figure}
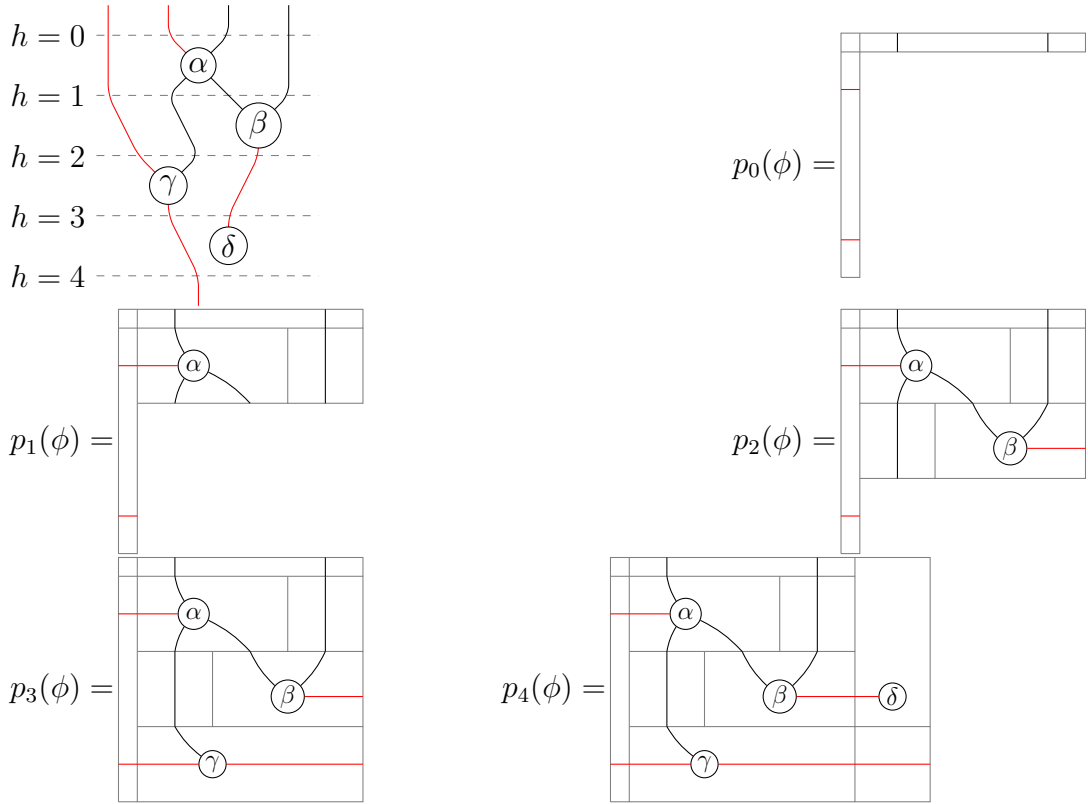

\begin{lemma} \label{lemma:p-functorial}
  For all 2-cell diagrams $\mu, \nu \in S_2$ such that \begin{tikzpicture}[baseline=2.5ex,scale=0.5,every node/.style={scale=0.7}]
    \draw[gray] (0,0) rectangle (2,2);
  \node[circle,draw,inner sep=1.5pt] at (.66,1.5) (mu) {$\mu$};
  \node[circle,draw,inner sep=2pt] at (1.33,.5) (nu) {$\nu$};
  \draw[red] (.25,2) -- (mu) -- (nu) -- (1.75,0);
  \draw (.25,0) -- (mu) -- (.8,2);
  \draw (1.2,0) -- (nu) -- (1.75,2);
  \end{tikzpicture} is defined,
  $p(\text{\begin{tikzpicture}[baseline=2.5ex,scale=0.5,every node/.style={scale=0.7}]
    \draw[gray] (0,0) rectangle (2,2);
  \node[circle,draw,inner sep=1.5pt] at (.66,1.5) (mu) {$\mu$};
  \node[circle,draw,inner sep=2pt] at (1.33,.5) (nu) {$\nu$};
  \draw[red] (.25,2) -- (mu) -- (nu) -- (1.75,0);
  \draw (.25,0) -- (mu) -- (.8,2);
  \draw (1.2,0) -- (nu) -- (1.75,2);
  \end{tikzpicture}}) \simeq \text{
    \begin{tikzpicture}[baseline=2.55ex,scale=1,every node/.style={scale=.8}]
        \draw[gray] (0,0) rectangle (2,1);
  \node[circle,draw,inner sep=1pt] at (.5,.5) (mu) {$p(\mu)$};
  \node[circle,draw,inner sep=1.5pt] at (1.5,.5) (nu) {$p(\nu)$};
  \draw[red] (0,.5) -- (mu) -- (nu) -- (2,.5);
  \draw (.5,0) -- (mu) -- (.5,1);
  \draw (1.5,0) -- (nu) -- (1.5,1);
    \end{tikzpicture}
  }$.

  Similarly, if \begin{tikzpicture}[baseline=2.5ex,scale=0.5,every node/.style={scale=0.7}]
      \draw[gray] (0,0) rectangle (2,2);
  \node[circle,draw,inner sep=1.5pt] at (.66,1.5) (mu) {$\mu$};
  \node[circle,draw,inner sep=2pt] at (1.33,.5) (nu) {$\nu$};
  \draw (.25,2) -- (mu) -- (nu) -- (1.75,0);
  \draw[red] (.25,0) -- (mu) -- (.8,2);
  \draw[red] (1.2,0) -- (nu) -- (1.75,2);
  \end{tikzpicture} is defined, then
  $p(\text{\begin{tikzpicture}[baseline=2.5ex,scale=0.5,every node/.style={scale=0.7}]
      \draw[gray] (0,0) rectangle (2,2);
  \node[circle,draw,inner sep=1.5pt] at (.66,1.5) (mu) {$\mu$};
  \node[circle,draw,inner sep=2pt] at (1.33,.5) (nu) {$\nu$};
  \draw (.25,2) -- (mu) -- (nu) -- (1.75,0);
  \draw[red] (.25,0) -- (mu) -- (.8,2);
  \draw[red] (1.2,0) -- (nu) -- (1.75,2);
  \end{tikzpicture}}) \simeq \text{\begin{tikzpicture}[baseline=2.5ex,scale=1,every node/.style={scale=0.8}]
   \begin{scope}[xshift=1cm,yshift=-.5cm,rotate=90]
  \draw[gray] (0,0) rectangle (2,1);
  \node[circle,draw,inner sep=2pt] at (.5,.5) (mu) {$p(\nu)$};
  \node[circle,draw,inner sep=1.5pt] at (1.5,.5) (nu) {$p(\mu)$};
  \draw (0,.5) -- (mu) -- (nu) -- (2,.5);
  \draw[red] (.5,0) -- (mu) -- (.5,1);
  \draw[red] (1.5,0) -- (nu) -- (1.5,1);
   \end{scope}
   \end{tikzpicture}
}$.
\end{lemma}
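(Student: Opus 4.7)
The plan is to prove both claims by induction on the number of 2-cell generators appearing in $\mu$ and $\nu$, focusing on the horizontal-composition claim and then observing that the vertical case follows by the obvious symmetry that swaps horizontal and vertical wires. First I would analyze the level structure of the composite 2-category diagram $\phi$ obtained by applying the translation rule for horizontal composition: by inspection of the translation image, the generators of $\mu$ all sit at higher levels than those of $\nu$, so $\phi$ has level decomposition with generators $\alpha_1,\dots,\alpha_m$ (those of $\mu$) followed by $\beta_1,\dots,\beta_n$ (those of $\nu$).

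Next I would handle the base case: when $\mu$ is a pure identity diagram (no generators), the composite is $\nu$ with extra identity wires on the left, $p(\mu)$ is the empty partial tiling, and horizontal composition (in the double category) of the empty partial tiling with $p(\nu)$ is $p(\nu)$ up to the equivalence rules for partial tilings (the ``identity absorption'' rules in the definition of $\simeq$). The symmetric statement holds for $\nu$ trivial.

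For the inductive step I would prove in succession two intermediate equivalences. First, for $0 \leq k \leq m$, show $p_k(\phi) \simeq p_k(\mu) \star_h \varepsilon_\nu$, where $\varepsilon_\nu$ is the empty partial tiling of the shape demanded by $\nu$'s input boundary and $\star_h$ denotes horizontal composition of partial tilings in $S_d$. This is by induction on $k$: the generator $\alpha_{k+1}$ is attached in the upper-left region of $\phi$, so by tracking through Lemma~1 its gluing position on $p_k(\phi)$ lies entirely on the $p_k(\mu)$ side of the composite $p_k(\mu) \star_h \varepsilon_\nu$ and coincides with its gluing position on $p_k(\mu)$ alone. Second, for $0 \leq k \leq n$, show $p_{m+k}(\phi) \simeq p(\mu) \star_h p_k(\nu)$ by the analogous induction, now attaching each $\beta_{k+1}$ on the lower-right side. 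Taking $k = n$ yields the claim.

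The main technical obstacle is the compatibility step implicit in both inductions: gluing a generator whose connections lie wholly in one factor of a horizontally composed pair of partial tilings must give the same result as gluing it on that factor and then composing. Verifying this requires unwinding the definition of gluing, tracking which identity cells the gluing introduces, and showing that the continuous boundary translations and the two cell-splitting rules of $\simeq$ suffice to reconcile any resulting difference. In particular, at the interface where the $p(\mu)$ and $p(\nu)$ regions meet, one may have to invoke Lemma~\ref{lemma:gluing-no-input} to pass between the two equivalent gluing positions that appear when the generator being attached has an identity on one side of its domain. Once this compatibility is established, the induction goes through cleanly, and the vertical composition claim is handled by the verbatim dual argument with the roles of the two wire colours interchanged.
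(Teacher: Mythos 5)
Your proposal is correct and follows essentially the same route as the paper: both exploit the fact that in the translated composite all of $\mu$'s generators occur at levels above all of $\nu$'s, identify the intermediate partial tiling at the level separating them as $p(\mu)$ together with an empty region reserved for $\nu$, and then argue that the remaining gluings land in that region and reconstruct $p(\nu)$, with the identity connecting edge treated as a special case. The paper simply asserts the shape of $p_h(\phi)$ at the separating level and the localisation of the subsequent gluing positions where you spell these out as two explicit inductions, so your write-up is a more detailed rendering of the same argument rather than a different one.
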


\begin{proof}
  By duality let us prove the result for the first case, horizontal composition.
  Let $\phi = \text{\begin{tikzpicture}[baseline=2.5ex,scale=0.5,every node/.style={scale=0.7}]
    \draw[gray] (0,0) rectangle (2,2);
  \node[circle,draw,inner sep=1.5pt] at (.66,1.5) (mu) {$\mu$};
  \node[circle,draw,inner sep=2pt] at (1.33,.5) (nu) {$\nu$};
  \draw[red] (.25,2) -- (mu) -- (nu) -- (1.75,0);
  \draw (.25,0) -- (mu) -- (.8,2);
  \draw (1.2,0) -- (nu) -- (1.75,2);
  \end{tikzpicture}}$.
  Let $h$ be the level between $\mu$ and $\nu$ in $\phi$.

  Assume first that the red edge connecting $\mu$ and $\nu$ is not
  empty (it is not an identity vertical morphism). Then $p_h(\phi) =
  \text{\begin{tikzpicture}[baseline=2ex,scale=0.8,every
        node/.style={scale=0.7}] \draw[gray] (-.25,0) -- (0,0) --
      (0,1) -- (2,1) -- (2,1.25) -- (-.25,1.25) -- (-.25,0);
      \draw[gray] (0,0) -- (1,0) -- (1,1); \draw[gray] (-.25,1) --
      (0,1) -- (0,1.25); \node[circle,draw,inner sep=1pt] at (.5,.5)
      (mu) {$p(\mu)$}; \draw[red] (-.25,.5) -- (mu) -- (1,.5); \draw
      (.5,1.25) -- (mu) -- (.5,0); \draw (1.5,1.25) -- (1.5,1);
  \end{tikzpicture}}$ and $p(\phi)$ is obtained from $p_h(\phi)$
  by gluing on it the generators in $\nu$.
  Since the vertical codomain of $\mu$ passes to the left of $\nu$, these generators are glued on positions $(k,i,j)$ with $k > 0$.
  Performing these gluings on an empty diagram gives $p(\nu)$,
  so $p(\phi)$ is equivalent to the required double diagram.

  If there is no red edge connecting $\mu$ to $\nu$ then
  $p_h(\phi) = \text{\begin{tikzpicture}[baseline=2ex,scale=0.8,every
        node/.style={scale=0.7}] \draw[gray] (-.25,0) -- (0,0) --
      (0,1) -- (2,1) -- (2,1.25) -- (-.25,1.25) -- (-.25,0);
      \draw[gray] (0,0) -- (1,0) -- (1,1);
      \draw[gray] (1,0) -- (2,0) -- (2,1);
      \draw[gray] (-.25,1) -- (0,1) -- (0,1.25); \node[circle,draw,inner sep=1pt] at (.5,.5)
      (mu) {$p(\mu)$}; \draw[red] (-.25,.5) -- (mu);
      \draw (.5,1.25) -- (mu) -- (.5,0);
      \draw (1.5,1.25) -- (1.5,0);
  \end{tikzpicture}}$ and $p(\phi)$ is obtained from
  $p_h(\phi)$ by gluing the generators in $\nu$ on the second part of its vertical codomain, so it can again be rewritten into the required form by unitality.
\end{proof}

\begin{lemma} \label{lemma:translation-inverse}
  For any 2-cell diagram $\phi \in S_d$, $p(t(\phi)) \simeq \phi$.
\end{lemma}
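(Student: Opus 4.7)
The plan is to proceed by structural induction on the 2-cell expression $\phi \in S_d$, using the five clauses by which $t$ was inductively defined as the inductive template.

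For the base cases, I would treat generators and identities separately. If $\phi = \alpha \in C$ is a generator, then $t(\phi)$ is a single 2-morphism in $S_2$ with domain $\domh \alpha \circ (\domv\alpha)^{\op}$ and codomain $(\codv\alpha)^{\op} \circ \codh\alpha$. Computing $p$ gives $p_0(t(\phi))$ as the empty partial tiling, and then $p_1(t(\phi))$ is obtained by a single gluing of $\alpha$ at the unique gluing position determined by Lemma~\ref{lemma:gluing-no-input} (or its predecessor), producing a rectangular partial tiling whose sole cell is $\alpha$. This is exactly $\phi$ up to unitality. If $\phi$ is a horizontal or vertical identity, then $t(\phi)$ is a wire in $S_2$ with no generators, so $p(t(\phi)) = p_0(t(\phi))$ is the empty partial tiling of the appropriate type, which is equivalent under continuous translation of boundaries to the identity 2-cell.

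For the inductive step, suppose $\phi = \mu \star \nu$ is a horizontal composition in $S_d$ (so $\codv \mu = \domv \nu$ and the composite makes sense). By definition of $t$, the diagram $t(\phi)$ in $S_2$ is precisely the first diagram considered in Lemma~\ref{lemma:p-functorial}, with $t(\mu)$ occupying the upper-left block and $t(\nu)$ the lower-right one. Applying Lemma~\ref{lemma:p-functorial} therefore gives
\[
p(t(\phi)) \;\simeq\; p(t(\mu)) \star p(t(\nu)),
\]
where the right-hand side is the horizontal gluing of the two rectangular partial tilings. By the induction hypothesis $p(t(\mu)) \simeq \mu$ and $p(t(\nu)) \simeq \nu$, and since $\simeq$ is a congruence for the partial-tiling composition (both sides being binary composable, Lemma~\ref{lemma:partial-tiling-complete} applies), we conclude $p(t(\phi)) \simeq \mu \star \nu = \phi$. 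The case of a vertical composition $\phi = \mu \circ \nu$ is symmetric, using the second part of Lemma~\ref{lemma:p-functorial} together with the fact that $t$ exchanges the roles of horizontal and vertical composition (the ``rotation by $\pi/4$'' phenomenon).

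The main obstacle I anticipate is bookkeeping around identities. Horizontal and vertical identities both translate to (mono-colored) wires with no 2-morphisms in $S_2$, so $p$ of their composites can produce partial tilings whose outer shape contains identity factors that must be normalized away via the equivalence rules before Lemma~\ref{lemma:p-functorial} applies cleanly. To handle this cleanly it is worth strengthening the induction hypothesis to record not only that $p(t(\phi)) \simeq \phi$, but also that $p(t(\phi))$ is a rectangular partial tiling of type $\domh\phi, \domv\phi \to \codh\phi, \codv\phi$, so that the rectangular shapes needed for binary composition are preserved at every step and the exchange law is automatically respected.
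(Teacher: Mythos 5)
Your proposal is correct and follows essentially the same route as the paper: structural induction on $\phi$ over the clauses defining $t$, with generators and identities as base cases and Lemma~\ref{lemma:p-functorial} combined with the induction hypothesis handling the two composition cases. The paper's own proof is terser (it simply asserts the base cases and displays the chain of equivalences for horizontal composition), so your extra care about identity bookkeeping and recording the rectangular type of $p(t(\phi))$ is a harmless elaboration rather than a departure.
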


\begin{proof}
  By induction on $\phi$. If $\phi$ is a generator or the identity,
  the result holds.

  If $\phi$ is a horizontal or vertical composition, then we use
  Lemma~\ref{lemma:p-functorial} and the induction hypothesis of
  the composed diagrams:
  
  $$p(t(\text{
\begin{tikzpicture}[baseline=2.55ex,scale=1,every node/.style={scale=.8}]
        \draw[gray] (0,0) rectangle (2,1);
  \node[circle,draw,inner sep=1.5pt] at (.5,.5) (mu) {$\mu$};
  \node[circle,draw,inner sep=2pt] at (1.5,.5) (nu) {$\nu$};
  \draw[red] (0,.5) -- (mu) -- (nu) -- (2,.5);
  \draw (.5,0) -- (mu) -- (.5,1);
  \draw (1.5,0) -- (nu) -- (1.5,1);
    \end{tikzpicture}
  })) = p(\text{\begin{tikzpicture}[baseline=3.2ex,scale=0.6,every node/.style={scale=0.5}]
    \draw[gray] (0,0) rectangle (2,2);
  \node[circle,draw,inner sep=1pt] at (.66,1.5) (mu) {$t(\mu)$};
  \node[circle,draw,inner sep=1.5pt] at (1.33,.5) (nu) {$t(\nu)$};
  \draw[red] (.25,2) -- (mu) -- (nu) -- (1.75,0);
  \draw (.25,0) -- (mu) -- (.8,2);
  \draw (1.2,0) -- (nu) -- (1.75,2);
  \end{tikzpicture}}) \simeq \text{
\begin{tikzpicture}[baseline=2.55ex,scale=1,every node/.style={scale=.7}]
        \draw[gray] (0,0) rectangle (2,1);
  \node[circle,draw,inner sep=.5pt] at (.5,.5) (mu) {$p(t(\mu))$};
  \node[circle,draw,inner sep=.5pt] at (1.5,.5) (nu) {$p(t(\nu))$};
  \draw[red] (0,.5) -- (mu) -- (nu) -- (2,.5);
  \draw (.5,0) -- (mu) -- (.5,1);
  \draw (1.5,0) -- (nu) -- (1.5,1);
    \end{tikzpicture}
  } \simeq \text{
\begin{tikzpicture}[baseline=2.55ex,scale=1,every node/.style={scale=.8}]
        \draw[gray] (0,0) rectangle (2,1);
  \node[circle,draw,inner sep=1.5pt] at (.5,.5) (mu) {$\mu$};
  \node[circle,draw,inner sep=2pt] at (1.5,.5) (nu) {$\nu$};
  \draw[red] (0,.5) -- (mu) -- (nu) -- (2,.5);
  \draw (.5,0) -- (mu) -- (.5,1);
  \draw (1.5,0) -- (nu) -- (1.5,1);
    \end{tikzpicture}
  }$$
\end{proof}

\begin{lemma} \label{lemma:faithful-translation}
  Let $\phi, \phi'$ be admissible diagrams in $S_2$ with $\phi \sim
  \phi'$. Then $p(\phi) \simeq p(\phi')$.
\end{lemma}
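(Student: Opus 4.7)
The plan is to induct on derivations of $\phi \sim \phi'$. The equivalence $\sim$ in the free 2-category $S_2$ is the congruence generated by the 2-category axioms: associativity and unitality of the vertical composition, associativity and unitality of the horizontal composition, and the exchange law. It therefore suffices to check invariance of $p$, up to $\simeq$, under each such axiom applied in context, together with compatibility with the congruence closure.

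Both aspects are handled via Lemma~\ref{lemma:p-functorial}, which reduces $p$ of any composition in $S_2$ to the corresponding composition of the $p$-images of the constituents, up to $\simeq$. Iterated application therefore unfolds both $p(\phi)$ and $p(\phi')$ into composites built, with the same shape, from the $p$-images of the leaves. For the associativity and unitality axioms, the two sides reassociate identically in $S_d$, and the equivalence is an instance of the defining axioms of $S_d$, giving $\simeq$ on the rectangular tilings involved via Lemma~\ref{lemma:partial-tiling-complete}. For the congruence (context) step, if $\phi$ and $\phi'$ agree outside of a subterm known by induction hypothesis to be $p$-equivalent, Lemma~\ref{lemma:p-functorial} produces composites of the same shape in which the corresponding constituents are $\simeq$-equivalent, and $\simeq$ is preserved by gluing and composition of partial tilings.

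The subtle case is the exchange law. The horizontal composition in $S_2$ corresponds, under the $\frac{\pi}{4}$ rotation of the translation, to either the horizontal or the vertical composition in $S_d$ depending on whether the shared $1$-cell is a red or a black generator; Lemma~\ref{lemma:p-functorial} covers both variants. Applying it repeatedly to the two sides of the interchange identity
\[
(\mu \circ \mu') \otimes (\nu \circ \nu') \;=\; (\mu \otimes \nu) \circ (\mu' \otimes \nu')
\]
yields two arrangements of $p(\mu), p(\mu'), p(\nu), p(\nu')$ in a $2 \times 2$ grid that differ only in the order of composition. The exchange law of $S_d$ identifies the two as the same 2-cell, hence as $\simeq$-equivalent partial tilings, which closes the induction.
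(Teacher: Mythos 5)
There is a genuine gap, and it comes from treating $p$ as if it were defined compositionally on expression trees. In the paper, $p$ is defined by scanning a sliced diagram level by level and gluing one generator at a time onto a partial tiling; it is not given by recursion on a decomposition of $\phi$ into composites. Your plan to ``unfold both $p(\phi)$ and $p(\phi')$ into composites built from the $p$-images of the leaves'' via Lemma~\ref{lemma:p-functorial} is not available: that lemma only treats the two specific staircase-shaped composites that arise as $t$-images of the horizontal and vertical compositions of $S_d$, not arbitrary vertical or horizontal composition in $S_2$. A general admissible diagram does not decompose into those shapes (the diagrams whose tilings are pinwheels, discussed in Section~\ref{sec:pinwheel}, are admissible but lie outside the image of $t$), and the sub-expressions of an admissible diagram are typically not admissible --- their boundaries are arbitrary alternating words $h_1; v_1^{\op}; \dots; h_n; v_n^{\op}$ --- so ``$p$ of a constituent'' is not even defined. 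Your appeal to Lemma~\ref{lemma:partial-tiling-complete} in the associativity/unitality cases also runs in the wrong direction: that lemma says a binary-composable rectangular tiling has a $\simeq$-invariant interpretation as a 2-cell, not that two tilings with equal interpretations are $\simeq$-equivalent, and the partial tilings arising mid-construction are neither rectangular nor binary composable.

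The proof the paper has in mind works with the generating moves of $\sim$ on sliced diagrams (as in the cited normalization paper): one may assume $\phi$ and $\phi'$ differ by a single exchange of the generators $\alpha$, $\beta$ occupying adjacent levels. Since $p_{k}(\phi)$ is built by successive gluings, the whole content of the lemma is that $p_{h-1}(\phi) \star_l \alpha \star_{l'} \beta \simeq p_{h-1}(\phi) \star_m \beta \star_{m'} \alpha$; because exchangeability forces the two gluing positions to be disjoint, the two orders of gluing produce either syntactically equal partial tilings or ones related by the moves of Figure~\ref{fig:equiv-gluing-positions}. Your proposal never engages with the gluing positions at all, which is where the actual work lies; the only part of your argument that survives is the (correct) intuition that the 2-categorical interchange should ultimately be absorbed by equivalence of partial tilings, but establishing that requires the local analysis of gluings rather than a functoriality argument.
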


\begin{proof}
  By induction we can assume that $\phi$ and $\phi'$ are related by a
  single exchange, swapping the generators between levels $h-1$, $h$
  and $h+1$. Let $\alpha$ be the generator between levels $h-1$ and
  $h$ and $\beta$ the one between $h$ and $h+1$.  It suffices to check
  that $p_{h-1}(\phi) \star_l \alpha \star_{l'} \beta \simeq
  p_{h-1}(\phi) \star_m \beta \star_{m'} \alpha$ where $l, l'$ are the
  gluing positions for the generators in $\phi$ and $m, m'$ are their
  counterparts in $\phi'$.  By a tedious case analysis one can check
  that because the generators at these slices can be exchanged, this
  ensures that the induced gluing positions are disjoint, such that the equivalence above either holds trivially (the partial tilings being syntactically equal) or via equivalences analogous to those of Figure~\ref{fig:equiv-gluing-positions}.
\end{proof}

\section{Word problem} \label{sec:word-problem}

We can use the translation defined in the previous section to
solve the word problem for double categories:

\begin{thm} \label{thm:word-problem}
  Let $S$ be a double signature.
  The word problem for 2-cells in the free double category $S_d$
  can be solved in $O(v e)$, where $v$ is the number of generators
  in the expressions and $e$ the number of connecting edges between them.
\end{thm}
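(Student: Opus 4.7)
The plan is to reduce the word problem in $S_d$ to the word problem in $S_2$ via the translation $t$, and then invoke the known algorithm for free 2-categories. Concretely, given two 2-cell expressions $\phi, \phi' \in S_d$, the algorithm computes $t(\phi), t(\phi') \in S_2$ and tests whether $t(\phi) \sim t(\phi')$ using the procedure of \citet{delpeuch2018normalization}, which decides the word problem in a free 2-category in $O(ve)$ time, where $v$ counts generators and $e$ counts connecting edges.

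Correctness of this reduction amounts to showing $\phi \simeq \phi'$ in $S_d$ if and only if $t(\phi) \sim t(\phi')$ in $S_2$. The forward direction (soundness) is immediate from the lemma preceding Section~\ref{sec:word-problem}: the translation $t$ respects the double category axioms and therefore descends to 2-cell equivalence classes. The reverse direction (completeness) chains together the main results of the preceding section. Assume $t(\phi) \sim t(\phi')$. Lemma~\ref{lemma:faithful-translation} gives $p(t(\phi)) \simeq p(t(\phi'))$ as partial tilings. These particular tilings are rectangular and binary composable, since they arise by applying $p$ to admissible diagrams that are themselves in the image of $t$, so by Lemma~\ref{lemma:partial-tiling-complete} they represent well-defined 2-cells in $S_d$. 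Finally, Lemma~\ref{lemma:translation-inverse} gives $\phi \simeq p(t(\phi)) \simeq p(t(\phi')) \simeq \phi'$.

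For the running time, one inspects the inductive definition of $t$ in Section~\ref{sec:translation}: each double category generator contributes exactly one 2-category generator, so the generator count is preserved; each horizontal or vertical wire becomes a single 1-morphism wire in $S_2$, with at most a constant number of additional identity wires introduced per composition step. Consequently the parameters $v$ and $e$ of $t(\phi)$ and $t(\phi')$ are within a constant factor of those of $\phi$ and $\phi'$, and the cited $O(ve)$ bound for the 2-categorical word problem transfers directly to $S_d$.

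The main obstacle, such as it is, lies in the complexity accounting: one must confirm the above size-preservation claim and check that the admissibility constraints produced by $t$ are compatible with the hypotheses of the algorithm of \citet{delpeuch2018normalization}. Both are routine given the explicit rules defining $t$ and the fact that admissible diagrams are just free 2-category expressions with a particular shape of domain and codomain, which the normalization algorithm handles without modification.
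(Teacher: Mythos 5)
Your proposal is correct and follows essentially the same route as the paper: translate via $t$, run the 2-categorical algorithm of \citet{delpeuch2018normalization}, and use the faithfulness of $p$ (via Lemma~\ref{lemma:faithful-translation} combined with Lemma~\ref{lemma:translation-inverse}) to transfer the answer back to $S_d$. You merely spell out more explicitly the soundness/completeness chain and the linear size bound on $t(\phi)$, both of which the paper leaves implicit.
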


\begin{proof}
  Given two diagrams $\phi, \phi'$ in $S_d$, we can compute their
  translation $t(\phi), t(\phi')$ in linear time. Then,
  we can check if these diagrams are equivalent as 2-cells in $S_2$
  using the algorithm of \cite{delpeuch2018normalization}, in $O(v e)$.
  As $p$ is faithful (Lemma~\ref{lemma:faithful-translation}), this
  determines if $\phi$ and $\phi'$ are equivalent in $S_d$.
\end{proof}

\section{Conclusion} \label{sec:pinwheel}

We have solved the word problem for free double categories
by reducing it to the word problem for free 2-categories.

Interestingly, our translation $p$ from the free 2-category to the
free double category works for all admissible diagrams, and
admissibility is a simple condition on the domain and codomain.  We
are not requiring any global condition such as binary composability on
the 2-cell. As a consequence, this translation $p$ can produce tilings
which are not binary composable.

\[
\begin{tikzpicture}
  \begin{scope}[xshift=5cm,yshift=-3.3cm]

    \node at (-1.7,1.5) {$\mapsto$};
  \draw[gray] (0,0) -- (0,3) -- (3,3) -- (3,0) -- (0,0);
  \draw[gray] (0,2) -- (2,2);
  \draw[gray] (2,3) -- (2,1);
  \draw[gray] (1,1) -- (3,1);
  \draw[gray] (1,0) -- (1,2);

  \begin{scope}[every node/.style={circle,draw,inner sep=2pt,fill=white}]
    \node at (1.5,1.5) (gamma) {$\gamma$};
    \node at (1,2.5) (alpha) {$\alpha$};
    \node[inner sep=1pt] at (.5,1) (beta) {$\beta$};
    \node at (2.5,2) (delta) {$\delta$};
    \node[inner sep=2.5pt] at (2,.5) (epsilon) {$\epsilon$};
    \draw (alpha) edge[bend left] (gamma) edge [bend right] (beta);
    \draw (epsilon) edge[bend left] (gamma) edge[bend right] (delta);
    \draw[red] (beta) edge[bend left] (gamma) edge[bend right] (epsilon);
    \draw[red] (delta) edge[bend left] (gamma) edge[bend right] (alpha);

    \draw[red] (0,1) -- (beta);
    \draw (1,3) -- (alpha);
    \draw[red] (3,2) -- (delta);
    \draw (2,0) -- (epsilon);

    \draw[red] (0,2.5) -- (alpha);
    \draw (.5,0) -- (beta);
    \draw[red] (3,.5) -- (epsilon);
    \draw (2.5,3) -- (delta);
    
  \end{scope}
  \end{scope}

  \begin{scope}[scale=0.7]
  \startdiagram{5}

  \drawinitialstrands{5}

  \diagslice{1}{2}{3}

  \diagslice{0}{2}{3}

  \diagslice{2}{2}{2}

  \diagslice{3}{3}{2}

  \diagslice{1}{3}{2}

  \storeedgecolor{0}{red}
  \storeedgecolor{1}{red}
  \storeedgecolor{6}{red}
  \storeedgecolor{8}{red}
  \storeedgecolor{9}{red}
  \storeedgecolor{11}{red}
  \storeedgecolor{13}{red}
  \storeedgecolor{15}{red}
  \finishdiagram

  \begin{scope}[every node/.style={circle,draw,inner sep=2pt,fill=white}]
    \node at (v0) {$\alpha$};
    \node[inner sep=1pt] at (v1) {$\beta$};
    \node at (v2) {$\gamma$};
    \node at (v3) {$\delta$};
    \node at (v4) {$\epsilon$};
  \end{scope}
  \end{scope}
\end{tikzpicture}
\]

It is therefore tempting to extend the forward translation $t$ to
double category diagrams which are not binary composable. By
the characterization of \cite{dawson1995forbiddensuborder} of non-composable diagrams, it is sufficient to translate the two
pinwheels: by induction, all diagrams could then be interpreted.
However, there could potentially be multiple ways to decompose a
diagram as a tree of binary and pinwheel composites, so to define $t$
properly we would need an equivalent of the general associativity
result of \cite{dawson1993general} with pinwheel composition. That
would only be possible given an appropriate notion of equivalence, which would amount to developing a notion of ``double category with
pinwheels''. This does not strike us as a particularly useful notion
as it would be rather complicated, with four different composition
operators and many axioms to relate their applications, only to
represent planar systems.

What this really means is that free
2-categories already provide the appropriate notion of ``free double
category with pinwheel composites'', in the sense that they capture the desired combinatorics with a much simpler axiomatization.

This fact has been observed at an intuitive level by \cite{reutter2019biunitary}
who modeled biunitary connections in a 2-category rather than a double category,
by using the same rotation. They noticed that biunitaries forming a pinwheel
pattern could be composed into a new biunitary. As double categories are not equipped with
such a composition, a 2-categorical model provides a more useful representation.
Modelling biunitaries in double categories would artificially forbid pinwheel composites
which are actually allowed physically.
We suspect that other uses of double categories, for instance in computer science~\citep{bruni2002symmetric},
could be recast in 2-categories without loss of expressivity, as they do not rely on the exclusion
of pinwheel composites.

\bibliographystyle{plainnat}

\end{document}